\newtheoremstyle{dotless}{}{}{\itshape}{}{\bfseries}{}{ }{}
\newtheorem{Theorem}{Theorem}[section]
\newtheorem{corollary}[Theorem]{Corollary} 
\newtheorem{lemma}[Theorem]{Lemma}
\newtheorem*{mainthm}{Main Theorem}
\theoremstyle{definition} 
\newtheorem{definition}[Theorem]{Definition} 
\newtheorem{remark}[Theorem]{Remark}
\newtheorem{example}[Theorem]{Example}
\newtheorem*{giang}{Comments on Theorem 5.3}
\newcommand{\ca}{\mathcal {A}} 
\newcommand{\cb}{\mathcal {B}} 
\newcommand{\cac}{\mathcal {C}}
\newcommand{\ch}{\mathcal {H}}
\newcommand{\cs}{\mathcal {S}}
\newcommand{\cc}{{\mathbb C}} 
\newcommand{\ee}{{\mathbb E}}
\newcommand{\rr}{{\mathbb R}} 
\newcommand{\sphere}{{\mathbb S}} 
\newcommand{\zz}{{\mathbb Z}}
\newcommand{\minus}{{-1}}
\newcommand{\eltwo}{{\ell^2}}
\newcommand{\bD}{{\mathbf D}}
\newcommand{\cd}{\operatorname{cd}} 
\newcommand{\gdim}{\operatorname{gd}}
\newcommand{\obdim}{\operatorname{obdim}}
\newcommand{\actdim}{\operatorname{actdim}}
\newcommand{\vk}{\operatorname{vk}}
\newcommand{\vktwo}{\vk_{\zz/2}}
\newcommand{\raag}{\mathrm {RAAG}}
\def\clap#1{\hbox to 0pt{\hss#1\hss}}
\newcommand{\comment}[1]{} 
\newcommand{\ga}{\alpha} 
\newcommand{\gb}{\beta} 
\newcommand{\gd}{\delta} 
\newcommand{\geps}{\varepsilon} 
\newcommand{\gi}{\iota} 
\newcommand{\gs}{\sigma} 
\newcommand{\gt}{\tau}
\newcommand{\gD}{\Delta} 
\newcommand{\gO}{\Omega}
\newcommand{\ab}{{ab}}
\newcommand{\pat}{{(PA_T)^\ab }}
\newcommand{\vertex}{\operatorname{Vert}}
\newcommand{\Card}{\operatorname{Card}}
\newcommand{\id}{\operatorname{id}}
\newcommand{\Ima}{\operatorname{Im}}
\newcommand{\sal}{\operatorname{Sal}}
\newcommand{\cat}{\operatorname{CAT}} 
\newcommand{\cone}{\operatorname{Cone}}
\newcommand{\OL}{\operatorname{OL}}
\newenvironment{enumerate1'}{ 
\begin{enumerate}[\upshape (1)$'$]}
	{ 
\end{enumerate}
} 
	{ 
\end{enumerate}
}
\newenvironment{enumeratea'}{ 
\begin{enumerate}[\upshape (a)$'$]}{ 
\end{enumerate}
} 
\numberwithin{equation}{section} 
\begin{document}
\title{Determining the action dimension of an Artin group by using its complex of abelian subgroups}

\author{Michael W. Davis 
\thanks{Partially supported by an NSA grant.} 
\and {Jingyin Huang}   
}
\date{\today} \maketitle

\begin{abstract}  \smallskip
Suppose that $(W,S)$ is a Coxeter system with associated Artin group $A$ and with a simplicial complex $L$ as its nerve.  We define the notion of a ``standard abelian subgroup'' in $A$.  The poset of  such subgroups in $A$ is parameterized by the poset of simplices in a certain subdivision $L_\oslash$ of  $L$. This complex of standard abelian subgroups is used to generalize an earlier result from the case of  right-angled Artin groups to case of general Artin groups, by calculating,  in many instances, the smallest dimension of a manifold model for $BA$. (This is the ``action dimension'' of $A$ denoted  $\actdim A$.)  If $H_d(L; \zz/2)\neq 0$, where $d=\dim L$, then $\actdim A \ge 2d+2$.  Moreover, when the $K(\pi,1)$-Conjecture holds for $A$, the inequality is an equality.
\smallskip 

\noindent
\textbf{AMS classification numbers}.  Primary: 20F36, 20F55, 20F65, 57S30, 57Q35, 
Secondary: 20J06, 32S22
\smallskip

\noindent
\textbf{Keywords}: action dimension, Artin group, Coxeter group
\end{abstract}

\section*{Introduction} \label{s:intro}
Given a discrete, torsion-free group $\pi$, its \emph{geometric dimension}, $\gdim \pi$,  is the smallest dimension of a model  for its classifying space $B\pi$ by a CW complex; its \emph{action dimension},  $\actdim \pi$, is the smallest dimension of a manifold model for $B\pi$. From general principles, $\actdim \pi\le 2\gdim \pi$.  Taking the universal cover of such a manifold model, we see that $\actdim \pi$ can alternately be defined as the smallest dimension of  a contractible manifold which admits a proper $\pi$-action.  The basic method for calculating $\actdim \pi$ comes from work  of Bestvina, Kapovich and Kleiner \cite{bkk}.  They show that if there is a finite complex $K$ such that the $0$-skeleton of the cone (of infinite radius) on $K$ coarsely embeds in $\pi$ and if the van Kampen obstruction for embedding $K$ in $S^m$ is nonzero, then $\actdim \pi \ge m+2$  (such a complex $K$ is an \emph{$m$-obstructor}). 

A \emph{Coxeter matrix} $(m_{st})$ on a set $S$ is an $(S\times S)$-symmetric matrix with $1$\,s on the diagonal and with each off-diagonal entry either an integer $\ge 2$ or the symbol $\infty$.  Associated to a Coxeter matrix there  is a \emph{Coxeter group} $W$ defined by the presentation with set of generators $S$ and with relations: $(st)^{m_{st}}=1$.  The pair $(W,S)$ is a \emph{Coxeter system}.   For a subset $T\le S$, the subgroup generated by $T$ is denoted $W_T$  and called a \emph{special subgroup}.  The pair $(W_T,T)$  also is a Coxeter system.  The subset $T$ is \emph{spherical} if $W_T$ is finite. Let $\cs(W,S)$ denote the poset of spherical subsets of $S$ (including the empty set).   There is a simplicial complex, denoted by $L(W,S)$  (or simply by $L$) called the \emph{nerve} of $(W,S)$.  Its simplices are the elements of $\cs(W,S)_{>\emptyset}$.  In other words, the vertex set of $L$ is $S$ and its simplices are precisely the nonempty spherical subsets of $S$.  Note that a subset $\{s,t\}< S$ of cardinality $2$ is an edge of $L$ if and only if $m_{st}\neq \infty$.  

Given a Coxeter matrix $(m_{st})$, there is
another  group $A$ called the associated \emph{Artin group}.  It has a generator $a_s$ for each vertex $s\in S$ and for each edge $\{s,t\}$ of $L$, an \emph{Artin relation}: 
	\begin{equation}\label{e:artin}
	\underbrace{a_sa_t\dots}_{m_{st} \text{ terms}}=\underbrace{a_ta_s\dots}_{m_{st} \text{ terms}}.
	\end{equation}
The \emph{special subgroup} $A_T$ is the subgroup generated by $\{a_s\mid s\in T\}$; it can be identified with the Artin group associated to $(W_T, T)$. The subgroup $A_T$ is \emph{spherical} if $T$ is a spherical.

A spherical Coxeter group $W_T$ acts as a group generated by linear reflections on $\rr^n$, where $n=\Card T$.  After tensoring with $\cc$, it becomes a linear reflection group on $\cc^n$.  Deligne \cite{deligne} proved that the complement of the arrangement of reflecting hyperplanes in $\cc^n$ is aspherical. Since $W_T$ acts freely on this complement, it follows that its quotient by $W_T$ is a model for $BA_T$.  Salvetti \cite{sal} described a specific $n$-dimensional CW complex, $\sal T$, called the \emph{Salvetti complex}, which is a model for this quotient of the hyperplane arrangement complement.  Thus, $\gdim A_T\le \dim (\sal T)=\Card T$. (In fact, since the cohomological dimension, $\cd A_T$, is $\ge \Card T$ the previous inequality is an equality.)  More generally, one can glue together the complexes $\sal T$, with $T$ spherical, to get a CW complex $\sal S$ with fundamental group $A$ ($=A_S$). The dimension of $\sal S$ is $\dim L + 1$.  The \emph{$K(\pi,1)$-Conjecture} for $A$ is the conjecture that $\sal S$ is a model for $BA$.  This conjecture is true in many cases, for example, whenever $L$ is a flag complex, cf.~\cite{cd1}. Thus, if the $K(\pi,1)$-Conjecture holds for $A$, then $\gdim A=\dim L+1$.

A Coxeter matrix $(m_{st})$  is \emph{right-angled} if  all its nondiagonal entries are either $2$ or $\infty$.  
The associated Artin group $A$ is a \emph{right-angled Artin group} (abbreviated $\raag$). 
The next theorem is one of our main results.  For $\raag$s, it  was proved in \cite{ados}.

\begin{mainthm}\textup{(Theorem~\ref{t:main} in section 6).}
Suppose  that $L$ is the nerve of a Coxeter system $(W,S)$ and that $A$ is the associated Artin group.  Let $d= \dim L$.  If $H_d(L;\zz/2)\neq 0)$, then $\actdim A\ge 2d+2$.  So, if the $K(\pi,1)$-Conjecture holds for $A$, then $\actdim A=2d+2$.
\end{mainthm}

The justification for the final sentence of this theorem is that if $K(\pi,1)$-Conjecture holds for $A$, then $\gdim A=d+1$; so, the largest possible value for the action dimension is $2d+2$.

For any simplicial complex $K$, there is another simplicial complex $OK$ of the same dimension, defined by ``doubling the vertices'' of $K$ (cf.\ subsection~\ref{ss:flats}). The complex $OK$ is called the ``octahedralization'' of $K$. In the right-angled case, the Main Theorem was proved by using $OL$ for an obstructor, where $L=L(W,S)$.

The principal innovation of this paper concerns a certain subdivsion $L_\oslash$ of $L$, called the ``complex of standard abelian subgroups'' of $A$.  The complex $L_\oslash$ plays the same role for a general Artin group as  $L$ does in the right-angled case; its simplices parameterize the ``standard'' free abelian subgroups in $A$.  We think of $OL_\oslash$ as the boundary of the union of the standard flat subspaces in the universal cover of $BA$.  The Main Theorem is proved by showing that
(1)  $\cone OL_\oslash$ coarsely embeds in $A$, and (2) when $H_d(L;\zz/2) \neq 0$, the van Kampen obstruction for $OL_\oslash$ in degree $2d$ is not zero.  

When $A$ is a $\raag$, a converse  to the Main Theorem  also was proved in \cite{ados}:  if $H_d(L;\zz/2)=0$ (and $d\neq 2$), then $\actdim A \le 2d+1$. For a general Artin group $A$ for which the $K(\pi,1)$-Conjecture holds, this converse, under a slightly weaker hypothesis, recently has  been proved in the PhD thesis of Giang Le \cite{le} :  if  $H^d(L;\zz)=0$ (and $d\neq 2$), then $\actdim A\le 2d+1$.

\section{Preliminaries}
\subsection{Coxeter systems}\label{ss:cox}
Let $(W,S)$ be a Coxeter system.

A \emph{reflection} in $W$ is the conjugate of an element of $S$.  Let $R$ denote the set of all reflections in $W$.  For any subset  $T<S$,  $R_T$ denotes the set of reflections in $W_T$, i.e., $R_T=R\cap W_T$.

The \emph{Coxeter diagram} $\bD$  of $(W,S)$ is a labeled graph which records the same information as does $(m_{st})$.   The vertex set of $\bD$ is $S$ and there is an edge between $s$ and $t$ whenever $m_{st}>2$.  If $m_{st}=3$, the edge is left unlabeled, otherwise it is labeled  $m_{st}$.  (The notation $\bD(W,S)$ or $\bD(S)$ also will be used for $\bD$.)

The Coxeter system is \emph{irreducible} if $\bD$ is connected.  A \emph{component} $T$ of $(W,S)$  is the vertex set of a connected component of $\bD$.  Thus, if $T_1, \dots, T_k$ are the components of $(W,S)$, then $\bD$ is the disjoint union of the induced graphs on $T_i$ and $W=W_{T_1}\times\cdots\times W_{T_k}$.  So, the  diagram shows us how to decompose $W$ as a direct product. In this paper we will only be concerned with the diagrams of spherical Coxeter groups.

\begin{definition}\label{d:irreducible}
Suppose $T$ is a spherical subset and that $T_1, \dots T_k$ are the vertex sets of the  components of $\bD(T)$.  Then $\{T_1,\dots T_k\}$ is the \emph{decomposition} of $T$ (into irreducibles).
\end{definition}

\begin{lemma}\label{l:longest}\textup{(cf. \cite{bourbaki}*{Ch.\,IV, Exerc.\,22, p.\,40}).}  
Suppose $(W_T,T)$ is a spherical Coxeter system.  Then there is a unique element $w_T$ of longest length in $W_T$.  It has the following properties:
\begin{itemize}
\item 
$w_T$ is an involution,
\item
$w_T$ conjugates $T$ to itself.
\item
$\ell(w_T) =\Card R_T$.
\end{itemize}
\end{lemma}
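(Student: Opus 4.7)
The plan is to realize $(W_T, T)$ via its geometric (Tits) representation as a finite Euclidean reflection group acting on $V := \rr^T$, and exploit the associated finite root system. Choose simple roots $\Delta = \{\alpha_s\}_{s\in T}$ indexed by the generators, with positive roots $\Phi^+$ and fundamental chamber $C = \{v \in V : \langle v,\alpha_s\rangle > 0 \text{ for all } s\in T\}$. Since reflections in $W_T$ are in bijection with pairs $\pm\alpha$ for $\alpha \in \Phi^+$, one has $\Card R_T = \Card \Phi^+$.

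The workhorse is the standard Coxeter-group length formula
\[
\ell(w) \;=\; \Card\bigl(\Phi^+ \cap w^{-1}(-\Phi^+)\bigr),
\]
counting positive roots sent by $w$ to negative roots (provable by induction via the exchange condition). This immediately gives $\ell(w) \le \Card\Phi^+ = \Card R_T$ for every $w$, with equality iff $w(\Phi^+) = -\Phi^+$, equivalently $w(C) = -C$. Because $(W_T,T)$ is spherical, $W_T$ acts simply transitively on chambers, so exactly one such element exists; call it $w_T$. This yields existence, uniqueness, and the length identity $\ell(w_T) = \Card R_T$ in one stroke.

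For the involution assertion, the defining property $w_T(\Phi^+) = -\Phi^+$ implies $w_T^{-1}(\Phi^+) = -\Phi^+$ as well (apply $w_T^{-1}$ and use that $w_T$ is linear, hence commutes with negation). Therefore $w_T^{-1}$ has the same characterization as $w_T$, and by uniqueness $w_T^{-1} = w_T$. For the conjugation statement, observe that $w_T(C) = -C$ has the same set of walls as $C$. The wall reflections of $C$ are exactly the elements of $T$, so $w_T T w_T^{-1}$ is again the set of wall reflections of $C$, giving $w_T T w_T^{-1} = T$; conjugation by $w_T$ thus induces a (possibly trivial) involutive automorphism of the Coxeter diagram $\bD(T)$.

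The main technical input one must either assume or establish is the length formula $\ell(w) = \Card(\Phi^+ \cap w^{-1}(-\Phi^+))$ together with simple transitivity of $W_T$ on chambers; both are foundational and treated in Bourbaki, Ch.~IV. Given them, the proof is essentially immediate, and importantly no case analysis on the classification of irreducible spherical Coxeter groups is needed—the argument is uniform.
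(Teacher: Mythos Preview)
Your argument is correct and is essentially the standard proof via the geometric representation and root system. One small quibble: simple transitivity on chambers holds for \emph{every} Coxeter group; what sphericality actually buys you is that the Tits cone is all of $V$, so that $-C$ is a chamber in the first place (for an infinite Coxeter group $-C$ lies outside the Tits cone and no $w$ with $w(C)=-C$ exists). With that adjustment your reasoning is complete.

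As for comparison with the paper: the paper does not give a proof of this lemma at all. It is stated as a known fact with a citation to Bourbaki (Ch.~IV, Exercise~22), and the argument you wrote is exactly the one that exercise is pointing at. So there is nothing substantively different to compare---you have simply supplied the omitted textbook proof.
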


\begin{remark}\label{r:centerless}
Let $w_T$ be the element of longest length in a spherical special subgroup $W_T$. Let $\gi_T:W_T\to W_T$ denote inner automorphism by $w_T$.  By Lemma~\ref{l:longest}, $\gi_T$ restricts to a permutation of $T$ and this permutation induces an automorphism of the Coxeter diagram $\bD(T)$.  It follows that $w_T$ belongs to the center of $W_T$ if and only if the restriction of $\gi_T$ to $T$ is the trivial permutation.  Suppose $(W_T,T)$ is irreducible.  It turns out that the center of $W_T$ is trivial if and only if the permutation $\gi_T\vert_T$ is nontrivial (this is because this condition is equivalent to the condition that $w_T$ does not act as the antipodal map, $-1$,  on the canonical representation). If this is the case, we say $W_T$ is  \emph{centerless}.  In particular, when $W_T$ is centerless,  its Coxeter diagram admits a nontrivial involution.  Using this, the question of  the centerlessness of $W_T$  easily can be decided, as follows.
When $\Card T=2$,  $W_T$ centerless if and only if it is a dihedral group $\mathbf{I_2}(p)$, with $p$ odd.  When  $\Card T>2$, the groups of type $\mathbf{A}_n$, $\mathbf{D}_n$ with $n$ odd,  and $\mathbf{E}_6$ are centerless, while those of type $\mathbf{B}_n$, $\mathbf{D}_n$ with $n$ even, $\mathbf{H}_3$, $\mathbf{H}_4$, $\mathbf{F}_4$, $\mathbf{E}_7$ and $\mathbf{E}_8$ are not (cf. \cite[Appendices I-IX, pp.\,250-275]{bourbaki} or \cite[Remark 3.1.2, p.\,125]{djs2}). 
\end{remark}

\begin{remark} 
It is proved in \cite{qi} that if $W_T$ is irreducible and not spherical, then its center is trivial.
\end{remark}

For later use we record the following technical lemma.
\begin{lemma}\label{l:root}\textup{(The Highest Root Lemma).}
Suppose $(W,S)$ is  spherical.  Then there is a reflection $r\in R$ such that $r\notin W_T$ for any proper subset $T<S$.
\end{lemma}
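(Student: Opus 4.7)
The lemma as stated requires irreducibility: if $W = W_{S_1}\times W_{S_2}$ with both factors nontrivial, then every reflection lies in one of the proper parabolics $W_{S_1}, W_{S_2}$. Assuming $(W,S)$ is irreducible spherical (the case that will be used), the plan is to invoke the classical ``highest root'' construction uniformly, via the canonical reflection representation.

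First I would realize $W$ on the canonical representation $V \cong \rr^S$, with simple roots $\{\alpha_s\}_{s\in S}$ and the $W$-invariant symmetric bilinear form $B$ given by $B(\alpha_s,\alpha_t) = -\cos(\pi/m_{st})$. Since $(W,S)$ is spherical, $B$ is positive definite. The root system $\Phi := W\cdot\{\alpha_s : s\in S\}$ is in $W$-equivariant bijection with $R$ via $\alpha \mapsto r_\alpha$. The standard fact I would cite is that
\[
R\cap W_T \;=\; \{r_\alpha : \alpha\in\Phi\cap V_T\}, \qquad V_T := \operatorname{span}_\rr\{\alpha_s : s\in T\},
\]
i.e.\ that the intersection of $\Phi$ with $V_T$ is exactly the root system of the sub-Coxeter group $(W_T,T)$ acting on $V_T$ via its own canonical representation.

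Given this correspondence, it suffices to exhibit a positive root $\alpha^*$ with full support in the simple-root basis; writing $\alpha^* = \sum_{s\in S} c_s\alpha_s$, every $c_s > 0$ will force $\alpha^*\notin V_T$ for any proper $T<S$, hence $r_{\alpha^*}\notin W_T$. I would take $\alpha^*$ to be a positive root maximizing the height $\operatorname{ht}(\alpha) := \sum_s c_s(\alpha)$, and argue by contradiction: suppose $T := \{s : c_s>0\}$ is a proper subset of $S$. By irreducibility, $\bD(S)$ is connected, so there exists $t\in S\setminus T$ adjacent to some $s\in T$ in $\bD$. Then
\[
B(\alpha^*,\alpha_t) \;=\; \sum_{u\in T} c_u\, B(\alpha_u,\alpha_t) \;\le\; c_s B(\alpha_s,\alpha_t) \;<\; 0,
\]
since $B(\alpha_u,\alpha_t)\le 0$ for all $u\ne t$ and $B(\alpha_s,\alpha_t) = -\cos(\pi/m_{st}) < 0$ because $m_{st}\ge 3$. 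Applying the simple reflection $s_t$ then yields a root $s_t(\alpha^*) = \alpha^* - \tfrac{2B(\alpha^*,\alpha_t)}{B(\alpha_t,\alpha_t)}\,\alpha_t$ of strictly greater height (the coefficient of $\alpha_t$ jumps from $0$ to a positive value), contradicting maximality of $\operatorname{ht}(\alpha^*)$. Hence $\alpha^*$ has full support, and $r_{\alpha^*}$ is the required reflection.

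The main obstacle I anticipate is justifying the parabolic--root correspondence $R\cap W_T = \{r_\alpha : \alpha\in\Phi\cap V_T\}$, which is standard for Coxeter groups but not completely trivial (it rests on the fact that $\Phi\cap V_T$ is already saturated as the root system of $W_T$ on $V_T$). The rest is the classical argument of Bourbaki adapted to the canonical representation; working with $B$ rather than with integer coefficients lets the proof handle the crystallographic and non-crystallographic types (including $\mathbf{H}_3, \mathbf{H}_4, \mathbf{I}_2(p)$) uniformly, without case analysis.
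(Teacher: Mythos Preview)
Your argument is correct and follows the same ``highest root'' idea as the paper, but your execution is cleaner. The paper cites Bourbaki's highest-root proposition for the crystallographic types and then waves at the remaining cases ($\mathbf{H}_3$, $\mathbf{H}_4$, $\mathbf{I}_2(p)$) by saying one can write down a suitable reflection directly. By working with the canonical bilinear form $B$ rather than with an integral root lattice, your maximality-of-height argument goes through uniformly, so no case analysis is needed. You are also right that the lemma as stated is false without irreducibility; the paper's ``it suffices to prove this when $(W,S)$ is irreducible'' is not a genuine reduction but a tacit restriction to the only case in which the lemma holds (and the only case used later, since $S_\oslash$ consists of irreducible spherical subsets).

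The one point you flag---the identification $R\cap W_T = \{r_\alpha : \alpha\in\Phi\cap V_T\}$---is indeed standard, and in fact you only need the easy direction: if $r_\alpha\in W_T$ then $\alpha\in V_T$, which follows immediately from the observation that $W_T$ fixes $V_T^{\perp}$ pointwise (each generator $s_t$ with $t\in T$ does), so the $(-1)$-eigenspace of any $r\in W_T$ lies in $V_T$.
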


\begin{proof}
It suffices to prove this when $(W,S)$ is irreducible.  So, suppose this.  If the diagram $(W,S)$ is not type $\mathbf{H}_3$, $\mathbf{H}_4$ or $\mathbf{I}_2(p)$ with $p=5$ or $p>6$, then $(W,S)$ is associated with a root system in $\rr^n$.  Each root $\phi$ can be expressed as an integral linear combination of the simple roots $\{\phi_s\}_{s\in S}$:
	\[
	\phi=\sum_{s\in S} n_s \phi_s,
	\]
where the coefficients $n_s$ are either all $\ge 0$ or all $\le 0$; the root $\phi$ is said to be  \emph{positive} or \emph{negative}, respectively.  Moreover, there is a positive root $\phi_r$ for each reflection $r\in R$.  It is proved in \cite[Ch.\,VI \S 1.8, Prop.\,25, p.\,178]{bourbaki} that there is always a highest root $\phi_r$ which dominates all other positive roots $\phi$, in the sense that the coefficients of $\phi_r-\phi$ are all $\ge 0$.  In particular, since $\phi_r$ dominates the simple roots,  the coefficients $n_s$ of $\phi_r$ are $\neq 0$.  On the other hand, if $r\in R_T$, then $n_s=0$ for all $s\in S-T$.  Hence, if $\phi_r$ is the highest root, then $r\notin R_T$ for any proper subset $T<S$.

When the diagram is type $\mathbf{H}_3$, $\mathbf{H}_4$ or $\mathbf{I}_2(p)$, one still has a ``root system''  with the properties in the previous paragraph,  except that the coefficients need not be integers.  One then can prove the lemma directly in each of these three cases by simply writing down a reflection $r$ which does not lie in any of the $R_T$. 
\end{proof}

Let $S_\oslash$ denote the set of irreducible nonempty spherical subsets of $S$.  In other words,
	\begin{equation}\label{e:soslash}
	S_\oslash :=\{T\in \cs(W,S)_{>\emptyset}\mid \bD(T)  \text{ is connected}\}.	
	\end{equation}
By Lemma~\ref{l:root}, one can choose a function $r:S_\oslash\to R$, denoted $T\mapsto r(T)$, such that 
	\begin{equation}\label{e:rT}
	r(T)\in R_T- \bigcup_{T'<T} R_{T'}
	\end{equation}
where the union is over all proper subsets of $T$.  (When $\bD (T)$ is type $\mathbf{A}_n$, it turns out that there is exactly one reflection in $R_T-\bigcup R_{T'}$; however,  in other cases, it can contain more than one reflection.)

\begin{lemma}\label{l:inj}
The function $r:S_\oslash \to R$ is injective.
\end{lemma}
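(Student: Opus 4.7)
The plan is to reduce the injectivity of $r$ to the classical fact that special subgroups of a Coxeter system behave well under intersection: for any $T, T' \subseteq S$,
\[
W_T \cap W_{T'} = W_{T\cap T'},
\]
a standard result (see, e.g., \cite{bourbaki}*{Ch.\,IV, \S 1, Ex.\,8}). This property is what makes the ``support'' $S(w)$ of an element $w\in W$ (the smallest subset of $S$ whose special subgroup contains $w$) well-defined, and it is exactly what is needed here.

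The argument then runs as follows. Suppose $T, T' \in S_\oslash$ with $r(T) = r(T')$, and set $r:=r(T)=r(T')$. By definition of the function $r$ given in \eqref{e:rT} we have $r \in R_T$ and $r \in R_{T'}$, so $r \in W_T \cap W_{T'}$. Applying the intersection identity, $r\in W_{T\cap T'}$, and since $r$ is a reflection, $r \in R_{T\cap T'}$. Now the defining property \eqref{e:rT} of $r(T)$ says that $r(T) \notin R_{T''}$ whenever $T'' < T$ is a proper subset. Since $T\cap T' \subseteq T$, the only possibility is $T\cap T' = T$; that is, $T\subseteq T'$. By the symmetric argument using the defining property of $r(T')$, we get $T'\subseteq T$. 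Hence $T=T'$.

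The step I would expect to be the main (pseudo-)obstacle is justifying the intersection identity $W_T\cap W_{T'}=W_{T\cap T'}$, but since this is standard for Coxeter systems I would simply quote it. No use of the irreducibility hypothesis in the definition of $S_\oslash$ or of the refined information from Lemma~\ref{l:root} is needed for injectivity itself; the content of Lemma~\ref{l:root} was only required to guarantee the existence of the map $r$ in the first place.
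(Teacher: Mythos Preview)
Your proof is correct and is essentially the same as the paper's. The paper packages the argument via the support function $S(w)$ (the set of letters in any reduced expression for $w$), observing that the defining property \eqref{e:rT} forces $S(r(T))=T$; you instead invoke the equivalent intersection identity $W_T\cap W_{T'}=W_{T\cap T'}$ directly to conclude $T\cap T'=T=T'$, which amounts to the same thing.
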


\begin{proof}
According to \cite[Ch.\,IV, \S 1.8, Prop.\,7, p.\,12]{bourbaki} or \cite[Prop.\,4.4.1, p.\,44]{dbook}, for any given $w\in W$, there is a well-defined subset $S(w)\le S$, such that the set of letters used in any reduced expression for $w$ is precisely $S(w)$.  By construction, $S(r(T))=T$.  Hence, if $r(T_1)=r(T_2)$, then $T_1=T_2$, i.e., $r$ is injective.
\end{proof}

\subsection{The subdivision $L_{\oslash}$ of $L$.}  
We want to define the subdivision $L_\oslash$ of $L$ ($=L(W,S)$).    
First, the vertex set of $L_\oslash$ is the set $S_\oslash$ defined in \eqref{e:soslash}, i.e., $S_\oslash$ is the set of (vertex sets of) irreducible spherical subdiagrams of $\bD(W,S)$.  Note that $S_\oslash$ is a subposet of $\cs(W,S)$.  We  regard $T\in S_\oslash$ as being the barycenter of the corresponding simplex of $L$.    Given a subset  $\ga$ of $S_\oslash$,  define its \emph{support}: 
	\begin{equation*}\label{e:sp}
	sp (\ga) =\bigcup_{T\in \ga} T.
	\end{equation*}
Next we give an inductive definition of what it means for a subset of $S_\oslash$ to be ``nested''.

\begin{definition}
Let $\ga$ be a subset of $S_\oslash$ such that $sp(\ga)$ is spherical. By definition, the $\emptyset$ is \emph{nested}. 
(This is the base case of the inductive definition.) Let $\{T_1, \dots T_k\}$ be the set of maximal elements in $\ga$.  Then $\ga$ is \emph{nested} if 
\begin{itemize}
\item
$sp(\ga)$ is spherical.
\item
$\{T_1, \dots , T_k\}$ is the decomposition of $sp(\ga)$ into irreducibles.
\item
$\ga_{<T_i}$ is nested. (This is defined by induction since $\Card (\ga_{<T_i}) < \Card \ga$.)
\end{itemize}
\end{definition}

Elements $T_1$, $T_2$ of $S_\oslash$ are \emph{orthogonal} if their diagrams $\bD(T_1)$ and $\bD(T_2)$ are distance $\ge 2$ apart in $\bD(S)$; in other words, $\bD(T_1)$ and $\bD(T_2)$ lie in different components  of the full subgraph of $\bD(S)$ spanned by $\bD(T_1) \cup \bD(T_2)$.  Elements  $T_1$, $T_2$ are \emph{comparable} if either $T_1<T_2$ or $T_2<T_1$.  Elements $T_1$, $T_2$ are \emph{transverse} if they are neither orthogonal nor comparable.  It is easy to see that a two element subset $\{T_1, T_2\}$ of $S_\oslash$ is nested if and only if $T_1$ and $T_2$ are either orthogonal  or comparable.  

The simplicial complex $L_\oslash$ is defined as follows: the simplicies of $L_\oslash$ are the nested subsets of $S_\oslash$.  It is  clear that $L_\oslash$ is a subdivision of $L$.  (The subdivision of an irreducible $2$-simplex is shown in Figure \ref{fig:1}.) This same subdivision plays a role in \cites{djs1,djs2}.  

\begin{remark}
Here is a more geometric description of $L_\oslash$.  Suppose $T$ is a spherical subset and that $\gs$ is the corresponding geometric simplex in $L$.  Since $L_\oslash$ is defined by applying the subdivision procedure to each simplex of $L$, it suffices to describe $\gs_\oslash$ .  Suppose $\{T_1,\dots, T_k\}$ is the decomposition of $T$ into irreducibles and that $\gs_i$ is the geometric simplex in $L$ corresponding to $T_i$.  This gives a join decomposition: 
	\(
	\gs=\gs_1*\cdots *\gs_k\  
	\).  
Suppose by induction on $\dim \gs$ that the subdivision has been defined for each proper subcomplex of $\gs$.  The subdivision $\gs_\oslash$ is then defined by using one of the following two rules.
\begin{itemize}
\item
If $k=1$, then $T$ is irreducible and the barycenter $b_T$ of $\gs$ will be a vertex of the subdivision. Define $\gs_\oslash:=(\partial \gs)_\oslash * b_T$. (In other words, $\gs_\oslash$ is formed by coning off $(\partial \gs)_\oslash$ to $b_T$.)
\item
If $k>1$, then $\gs_\oslash := (\gs_1)_\oslash *\cdots * (\gs_k)_\oslash$.
\end{itemize}
\end{remark}

\begin{figure}[ht!]
	\centering
	\labellist
	\small\hair 2pt
	\pinlabel $\{a,b,c\}$ at 200 110
	\pinlabel $\{a\}$ at 185 274
	\pinlabel $\{b\}$ at 4 0
	\pinlabel $\{c\}$ at 314 0
	\pinlabel $\{b,c\}$ at 161 0
	\pinlabel $\{a,b\}$ at 61 152
	\endlabellist
	\includegraphics[scale=0.6]{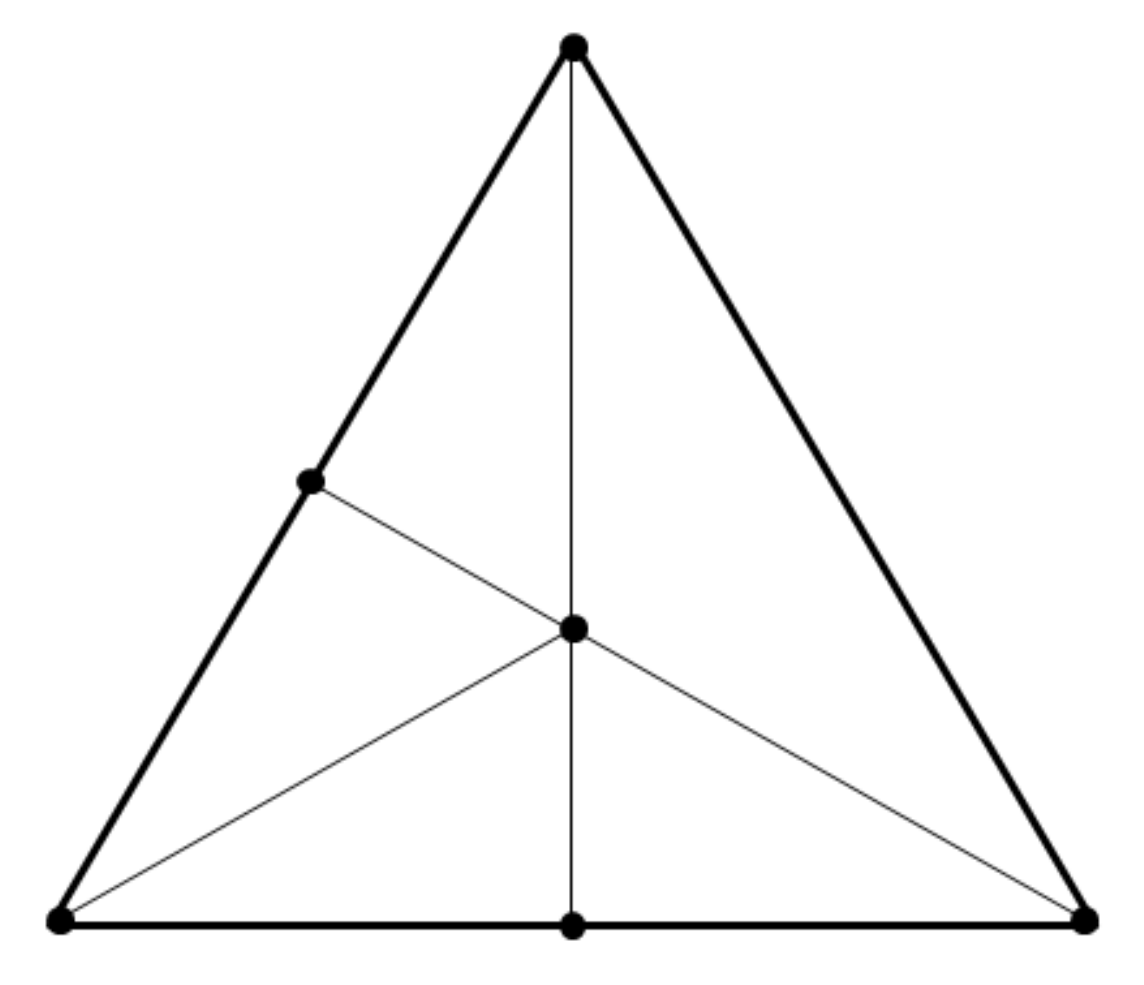}
	\caption{Subdivision for $\bD=$  \protect\includegraphics[scale=0.75]{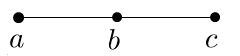}}
	\label{fig:1}
\end{figure}

The following lemma will be used in subsection \ref{ss:obstructor}.  Its proof is straightforward.
\begin{lemma}\label{l:flag}\textup{(cf. \cite[Cor.~3.5.4]{djs2}).}
The subdivision $L_\oslash$ is a flag complex.
\end{lemma}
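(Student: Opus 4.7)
The plan is to prove flagness by induction on the cardinality of a pairwise-nested subset $\alpha\subseteq S_\oslash$, exploiting the given characterization that a two-element subset $\{T_1,T_2\}$ is nested if and only if $T_1$ and $T_2$ are orthogonal or comparable. So suppose $\alpha\subseteq S_\oslash$ has the property that every two-element subset is nested; the goal is to verify the three bullets in the definition of nested for $\alpha$ itself. The base cases $|\alpha|\le 2$ follow tautologically from the pairwise characterization and from the fact that any single $T\in S_\oslash$ is its own decomposition into irreducibles.

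For the inductive step, I would first list the maximal elements $T_1,\dots,T_k$ of $\alpha$ (which exist by finiteness). The key observation is that any two distinct maximal elements $T_i,T_j$ are orthogonal: since $\{T_i,T_j\}$ is nested by hypothesis, either they are orthogonal or they are comparable, but comparability contradicts maximality of both. Next I would check that $sp(\alpha)=T_1\cup\cdots\cup T_k$. Indeed, given any $T\in\alpha$, a maximal chain in $\alpha$ starting at $T$ terminates at some $T_i$ with $T\le T_i$ (the alternative, that $T$ is orthogonal to every $T_j$, would force $T$ itself to be maximal and hence equal to some $T_i$).

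Since orthogonality in $\bD(S)$ implies disjointness and the absence of edges between the respective vertex sets, the diagram $\bD(sp(\alpha))$ is the disjoint union of the irreducible spherical diagrams $\bD(T_i)$. This immediately gives the first two bullets of the definition of nested: $sp(\alpha)$ is spherical (since a direct product of finite irreducible Coxeter groups is finite), and $\{T_1,\dots,T_k\}$ is precisely its decomposition into irreducibles. For the third bullet, I apply the inductive hypothesis to each $\alpha_{<T_i}$: this subset has cardinality strictly less than $|\alpha|$ (since $T_i\notin\alpha_{<T_i}$), and it inherits the pairwise-nested property as a subset of $\alpha$, so by induction $\alpha_{<T_i}$ is nested. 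This completes the verification.

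The only place that requires care is confirming that orthogonal elements of $S_\oslash$ are disjoint as vertex subsets of $S$ and contribute no edges between each other in $\bD(S)$, which is essentially built into the definition (``distance $\ge 2$ apart in $\bD(S)$''). I do not anticipate a genuine obstacle beyond keeping the bookkeeping of maximal elements and the poset-theoretic decomposition $\alpha=\bigsqcup_i(\alpha_{\le T_i})$ straight; the inductive structure is dictated by the inductive definition of ``nested'' itself.
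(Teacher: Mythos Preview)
Your proposal is correct. The paper does not actually supply a proof for this lemma---it merely remarks that the proof is ``straightforward'' and cites \cite{djs2}---so there is no argument to compare against; your induction on $|\alpha|$, driven directly by the recursive definition of ``nested,'' is precisely the natural way to fill in the details, and I see no gaps. (One small note: the parenthetical about ``the alternative, that $T$ is orthogonal to every $T_j$'' is unnecessary, since the preceding maximal-chain argument already shows every $T\in\alpha$ lies below some maximal $T_i$; but this does not affect the validity of the proof.)
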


\begin{remark}\label{r:edges}
Assuming this lemma, one can define $L_\oslash$ without using induction.  First, there is a direct description of the $1$-skeleton of $L_\oslash$:  the edges are the $2$-element subsets $\{T_1,T_2\}$ of $S_\oslash$ such that $T_1$ and $T_2$ are either orthogonal or comparable. Second, $L_\oslash$ is the flag complex determined by this $1$-skeleton.
\end{remark}

\subsection{Artin groups}\label{ss:artin}
Suppose $A$ is the Artin group associated to a Coxeter system $(W,S)$ and that $\{a_s\}_{s\in S}$ is its standard set of generators.  There is a canonical epimorphism $p:A\to W$ defined by $a_s\mapsto s$.  The \emph{pure Artin group} $PA$ is the kernel of $p$.  

There is a map $i:w\mapsto a_w$ from $W$  to  $A$, which is a set theoretic section for $p$.  It is defined as follows.  If $s_1\cdots s_l$ is a reduced expression for $w$, then 
	\begin{equation}\label{e:aw}
	a_w=a_{s_1}\cdots a_{s_k}
	\end{equation}
The element $a_w$ does not depend on the choice of reduced expression by  Tits' solution to word problem for Coxeter groups.  This asserts that any two reduced expressions for an element of $W$ differ by ``braid moves,''  corresponding to Artin relations of type \eqref{e:artin}, e.g., see \cite[\S 3.4, p.\,40]{dbook}.  (N.B. the map $i:W\to A$ is not a homomorphism.)

\begin{remark}
For subsets $T$, $T'$ subsets of $S$, we have
\[
W_T\cap W_{T'}= W_{T\,\cap\, T'}, \quad A_T\cap A_{T'}= A_{T\,\cap\, T'},\quad PA_T\cap PA_{T'}= PA_{T\,\cap\, T'}.
\]
The first formula is proved in \cite[Ch.\,IV.8, Thm.\,2, p.\,12]{bourbaki}, the second in 
\cite[]{lek}. The third follows from the first two.
\end{remark}

\subsection{Complements of hyperplane arrangements and \\pure Artin groups}\label{ss:pure}
When $T$ is a spherical subset of $S$, $W_T$ is a finite reflection group on $\rr^n$ (where $n$ is the cardinality of $T$) and so, by complexification, also a reflection group on $\cc^n$.  Let $M_T$ denote the complement of the union of reflecting hyperplanes in $\cc^n$.  Deligne \cite{deligne} proved that $\pi_1(M_T)$ is the pure Artin group $PA_T$ and that $M_T$ is a model for $BPA_T$.  Even when $W_T$ is infinite it still has a geometric representation as a reflection group on $\rr^n$ so that $W_T$ acts properly on the interior of a certain convex cone (the ``Tits cone'').  Hence, $W_T$ acts properly on some convex open subset $\gO_T<\cc^n$.  We also denote the complement of the union of reflecting hyperplanes in $\gO_T$ by $M_T$. It is proved in \cite{lek} that $\pi_1(M_T)\cong PA_T$.  The original version of the $K(\pi,1)$-Conjecture was that $M_T\sim BPA_T$.  (For more details, see \cite{cd1} or \cite{lek}.)

When $T'<T$, there is an open convex  subset in $\gO_T$ which only intersects the reflecting hyperplanes of $W_{T'}$ and which, therefore, can be identified with the product of $\gO_{T'}$ and a disk.  Taking complements of hyperplanes we get an inclusion $M_{T'}\hookrightarrow M_T$.  On the level of fundamental groups, this induces the standard inclusion $PA_{T'}\hookrightarrow PA_T$.  

Next, we consider the abelianization $PA^{ab}$ of $PA$ as well as the abelianization of the special subgroups $PA_T$. Since $PA$ is the fundamental group of the hyperplane arrangement complement $M_S$, we have  $PA^\ab= H_1(M_S)$.  There is one reflecting hyperplane for each reflection $r\in R$.  Since each such hyperplane is the intersection of a linear hyperplane with an open convex subset of $\cc^n$, $H_1(M_S)$ is generated by loops around these hyperplanes; in fact, $H_1(M_S)$ is the free abelian group on $R$, denoted $\zz^R$ (e.g., see \cite{ot}).  So, we have proved the following.

\begin{lemma}\label{l:abelianP}
$PA^\ab$ is the free abelian group $\zz^R$.  Similarly, for any $T<S$, $(PA_T)^\ab =\zz^{R_T}$.
\end{lemma}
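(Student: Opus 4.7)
The plan is to compute $H_1(M_S;\zz)$ directly via the Hurewicz theorem, since $PA^\ab \cong H_1(M_S;\zz)$, and then to observe that the same argument with $T$ in place of $S$ yields $(PA_T)^\ab \cong \zz^{R_T}$. So I reduce to showing that $H_1(M_S;\zz)$ is free abelian on the set $R$ of reflections, with explicit generators given by meridian loops around the reflecting hyperplanes.

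For the generators: for each $r\in R$, let $H_r \subset \gO_S$ denote the fixed complex hyperplane of $r$, and choose a small meridian loop $\mu_r$ that links $H_r$ transversely once and meets no other hyperplane. Because $W$ acts properly on $\gO_S$, the hyperplane arrangement is locally finite on $\gO_S$, so a standard transversality argument (the hyperplane-complement version already cited from Orlik--Terao \cite{ot}) shows that every loop in $M_S$ is homologous in $M_S$ to a finite $\zz$-linear combination of the $[\mu_r]$. Hence the classes $[\mu_r]$ generate $H_1(M_S;\zz)$.

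For linear independence: for each $r$ pick a linear form $\ell_r$ on $\cc^n$ whose zero set is $H_r$, and form the closed logarithmic $1$-form $\omega_r := (2\pi i)^{-1}\, d\ell_r/\ell_r$ on $M_S$. The residue formula gives $\int_{\mu_{r'}} \omega_r = \delta_{r,r'}$, so pairing against the $\omega_r$ separates the classes $[\mu_r]$ in $H_1(M_S;\zz)$. Combining the two steps, $H_1(M_S;\zz) \cong \zz^R$, and hence $PA^\ab \cong \zz^R$; repeating the argument verbatim with $\gO_T$, $M_T$, $R_T$ in place of $\gO_S$, $M_S$, $R$ yields $(PA_T)^\ab \cong \zz^{R_T}$.

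The main point that requires care is the generation step when $W$ is infinite: the arrangement in $\gO_S$ then consists of infinitely many hyperplanes, so one cannot quote the finite-arrangement result directly. However, since any compact $1$- or $2$-chain used to represent a class or a relation in $H_1(M_S;\zz)$ meets only finitely many hyperplanes (by proper discontinuity of the $W$-action on $\gO_S$), the finite-arrangement argument of \cite{ot} applies to a neighborhood of each such chain, and the global conclusion follows.
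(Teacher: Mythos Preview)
Your argument is correct and follows essentially the same route as the paper: identify $PA^\ab$ with $H_1(M_S;\zz)$ via Hurewicz and then compute $H_1$ of the hyperplane complement as the free abelian group on the set of reflecting hyperplanes. The paper simply cites \cite{ot} for this last fact, whereas you spell out the standard mechanism (meridian loops for generators, logarithmic $1$-forms for independence) and address the local-finiteness issue in the infinite case; this extra care is welcome but does not constitute a different approach.
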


Let $\{e_r\}_{r\in R}$ be the standard basis for $\zz^R$. For each fundamental reflection $s\in S$, the element $(a_s)^2\in PA$ is a loop around the reflecting hyperplane corresponding to $S$, i.e., $e_s$ is represented by $(a_s)^2$, where $a_s$ is a standard Artin generator.  Any reflection $r\in R$ can be written as $r=wsw^\minus$ for some $s\in S$ and $w\in W$.  Put
	\begin{equation}\label{e:gepsr}
	\geps_r:= (a_w) (a_s)^2 (a_w)^{-1},
	\end{equation}
where $a_w$ is the element of $A$ defined by \eqref{e:aw} (any other element of $A$ lying above $w$ would serve as well). Note that $\geps_r\in PA$ and that it projects to $e_r\in PA^\ab$. Thus, $\{e_r\}_{r \in R}$ is the standard basis for $\zz^R=PA^\ab$.  (Usually we will write the group operation in $PA^\ab$ additively.)

\section{Configurations of standard abelian subgroups}\label{s:config}

\subsection{The center of a spherical Artin group}\label{ss:artincenter}
Suppose $A_T$ is a spherical Artin group.  Define $\gD_T\in A$ by
	\begin{equation}\label{e:deltaT}
	\gD_T=a_{w_T},
	\end{equation}
where $w_T\in W_T$ is the element of longest length defined in Lemma~\ref{l:longest} and $a_{w_T}$ is defined by \eqref{e:aw}. For any $t\in T$, we have $w_Tt=\gi(t)w_T$, where $\gi:T\to T$ is the permutation induced by $w_T$.  Moreover, there is a reduced expression for $w_T$ ending with the letter $t$ as well as another reduced expression beginning with $\gi(t)$. So, $a_{w_T}(a_t)^\minus=(a_{\gi(t)})^\minus a_{w_T}$. Hence,
	\[
	\gD_T a_t=a_{\gi(t)}\gD_T.
	\]
It follows that $(\gD_T)^2$ lies in the center, $Z(A_T)$, of $A_T$ and that $\gD_T\in Z(A_T)$ if and only if $\gi:T\to T$ is the identity permutation.  If $T$ is spherical and irreducible,  put 
	\begin{equation*}\label{e:delta}
	\gd_T:=
	\begin{cases}
	\gD_T,	&\text{if $\gi=\id$;}\\
	(\gD_T)^2,	&\text{otherwise}.
	\end{cases}
	\end{equation*}
Note that $(\gD_T)^2\in PA_T$. The next lemma follows from \cite[Thm.\,4.7, p.\,294]{best99}.  
 
\begin{lemma}\label{l:center}
Suppose $(W_T,T)$ is spherical and irreducible.  Then $Z(A_T)$ is infinite cyclic with generator $\gd_T$.  Similarly, $Z(PA_T)$ is infinite cyclic with generator $(\gD_T)^2$.
\end{lemma}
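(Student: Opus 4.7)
The plan is to reduce both statements to the classical theorem on the centers of spherical Artin groups, due to Deligne and Brieskorn--Saito, recorded in the form used here as \cite[Thm.\,4.7]{best99}. The discussion preceding the lemma has already shown $\gd_T\in Z(A_T)$ and $(\gD_T)^2\in Z(A_T)\cap PA_T\subseteq Z(PA_T)$, so only the reverse inclusions are at issue.

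For $Z(A_T)\subseteq \langle \gd_T\rangle$, I would argue via the Garside structure on $A_T$ with Garside element $\gD_T$. Every element has a unique normal form $\gD_T^{k}\cdot u$, where $u$ is a product of simple elements not left-divisible by $\gD_T$. Commuting with every standard generator $a_s$, combined with the relation $\gD_T a_s\gD_T^{-1}=a_{\gi(s)}$ established just before the lemma, forces the $u$-part to be trivial and forces $\gi^{k}=\id$. Since $\gi$ is an involution, this in turn forces $k$ to be even unless $\gi=\id$, yielding $Z(A_T)=\langle \gd_T\rangle$ in both cases. This step is essentially the content of \cite[Thm.\,4.7]{best99}.

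For $Z(PA_T)=\langle (\gD_T)^2\rangle$, it is enough to prove $Z(PA_T)\subseteq Z(A_T)$, since then $Z(PA_T)\subseteq Z(A_T)\cap PA_T=\langle(\gD_T)^2\rangle$ (using that $\gD_T\notin PA_T$ for $T\ne\emptyset$, whereas $(\gD_T)^2$ always lies in $PA_T$). Given $z\in Z(PA_T)$ and $s\in T$, the inclusion $a_s^{2}\in PA_T$ gives $a_s^{2}\, z\, a_s^{-2}=z$, so conjugation by $a_s$ has order at most $2$ on $z$, yielding $a_s z a_s^{-1}=z^{\pm 1}$. The $-1$ case is ruled out by the length homomorphism $\ell\colon A_T\to\zz$ sending each $a_s\mapsto 1$ (well-defined because the Artin relations preserve word length): conjugation preserves $\ell$, so $a_s z a_s^{-1}=z^{-1}$ would force $\ell(z)=0$. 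But $Z(PA_T)$ is infinite cyclic (the pure-Artin half of \cite[Thm.\,4.7]{best99}) and contains $(\gD_T)^{2}$, whose image under $\ell$ is $2\ell(w_T)>0$; hence every nontrivial element of $Z(PA_T)$ has nonzero length, excluding $\ell(z)=0$. Therefore $a_s z a_s^{-1}=z$ for every $s\in T$, and $z\in Z(A_T)$, completing the argument.

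The main obstacle is the first step: showing that the center of a spherical irreducible Artin group is infinite cyclic is the genuinely nontrivial classical computation. Once this is granted from \cite{best99}, the pure-Artin statement follows from a short bookkeeping with finite-index combined with the length homomorphism.
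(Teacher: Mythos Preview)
The paper gives no argument beyond recording that both assertions follow from \cite[Thm.\,4.7]{best99}; your treatment of $Z(A_T)$ via the Garside normal form is just an unpacking of that citation and matches the paper's approach.

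Your added argument for $Z(PA_T)$, however, has a genuine gap. From $a_s^{2}za_s^{-2}=z$ you deduce ``conjugation by $a_s$ has order at most $2$ on $z$, yielding $a_s z a_s^{-1}=z^{\pm1}$.'' This implication is false in general: knowing that an automorphism squares to the identity on an element $z$ does not force the image of $z$ to lie in $\langle z\rangle$. (For a toy model, let $\zz/2$ act on $\zz^2$ by swapping coordinates and take $z=(1,0)$.) In your situation the conclusion $a_s z a_s^{-1}\in\{z,z^{-1}\}$ would follow once you know conjugation by $a_s$ preserves a copy of $\zz$ containing $z$, i.e.\ once you know $Z(PA_T)$ is infinite cyclic---but that is precisely (half of) the statement under proof. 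You then make this circularity explicit by invoking ``$Z(PA_T)$ is infinite cyclic (the pure-Artin half of \cite[Thm.\,4.7]{best99})'' to rule out the $-1$ case. If the cited theorem already covers $PA_T$, your extra argument is superfluous; if it covers only $A_T$, your argument does not stand on its own.

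If you want a self-contained reduction of the $PA_T$ statement to the $A_T$ statement, one route is to show that every abelian normal subgroup of $A_T$ lies in $Z(A_T)$ (which follows, for instance, from the biautomatic/translation-length structure on spherical Artin groups), and then apply this to the characteristic subgroup $Z(PA_T)\trianglelefteq A_T$; alternatively one can argue geometrically via the free $\cc^*$-action on the hyperplane complement $M_T$. Either way, the ``order $\le 2$'' step should be dropped.
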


If $(W_T,T)$ has more than one component, define $\gd_T$ to be the central element given by product of the appropriate choice of $\gD_{T'}$ or $(\gD_{T'})^2$ for each component $T'$.

We also can define corresponding  central elements in $PA_T$ and $(PA_T)^\ab$ by
	\begin{align*}
	\geps_T &:= (\gD_T)^2 \in PA_T, \text{ \ and} \\ 
	e_T&:= \text{\,image of  $(\gD_T)^2$ in } (PA_T)^\ab\ . 
	\end{align*}

\begin{lemma}\label{l:eT}
Suppose $T$ is spherical. Then $e_T\in \pat$ is the sum of the standard basis elements:
	\[
	e_T= \sum_{r\in R_T} e_r .
	\]
\end{lemma}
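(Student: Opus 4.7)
The plan is to pass to the topological description of $(PA_T)^\ab$. By Lemma~\ref{l:abelianP}, the isomorphism $(PA_T)^\ab \cong H_1(M_T) \cong \zz^{R_T}$ is realized by sending the class of a loop $\gamma \subset M_T$ to $\sum_{r \in R_T} \operatorname{lk}(\gamma, H_r)\, e_r$, where $H_r \subset \cc^n$ is the fixed hyperplane of the reflection $r$ (with $n = \Card T$) and the basis element $e_r$ corresponds to a positively oriented meridian of $H_r$. So it is enough to realize $(\Delta_T)^2$ by an explicit loop whose linking numbers with the $H_r$'s I can compute.

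Since $T$ is spherical, each $H_r$ is a complex linear subspace containing the origin, so the scalar multiplication action of $\cc^*$ on $\cc^n$ preserves every $H_r$ and restricts to an action on $M_T$. Fix a basepoint $x_0 \in M_T$ in the fundamental chamber and consider the loop
\[
\gamma \colon [0,1]\to M_T, \qquad \gamma(t) = e^{2\pi i t}\, x_0.
\]
Via Deligne's identification $A_T \cong \pi_1(M_T/W_T)$ \cite{deligne}, the element $\Delta_T = a_{w_T}$ of \eqref{e:deltaT} is represented by a specific lift of a path from $[x_0]$ to itself in $M_T/W_T$ associated to a reduced expression for the longest element $w_T$; concatenating this path with its $w_T$-translate and lifting to $M_T$ produces a loop homotopic in $M_T$ to $\gamma$ and representing $(\Delta_T)^2 \in PA_T$.

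Granting this identification, the remaining computation is immediate. For each $r \in R_T$, choose a nonzero linear functional $\ell_r \colon \cc^n \to \cc$ with $\ker \ell_r = H_r$; then
\[
\ell_r\bigl(\gamma(t)\bigr) = e^{2\pi i t}\, \ell_r(x_0),
\]
which is a loop in $\cc^*$ of winding number $1$ (since $\ell_r(x_0)\neq 0$). Hence $\operatorname{lk}(\gamma, H_r) = 1$ for every $r \in R_T$, and so
\[
e_T = [\gamma] = \sum_{r\in R_T} e_r
\]
in $(PA_T)^\ab = \zz^{R_T}$.

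The principal obstacle is the identification of $\gamma$ with $(\Delta_T)^2$ in the second paragraph. For the classical braid group this reduces to the familiar fact that the full twist is obtained by rotating $\cc$ by $2\pi$. For a general spherical Artin group it is also classical, but it requires unwinding the Brieskorn--Deligne construction of the section $i\colon W_T \to \pi_1(M_T/W_T)$ on the longest element $w_T$ and checking that $i(w_T)^2$ is represented by the $S^1$-orbit of $x_0$. A purely algebraic alternative would proceed case-by-case through the classification of finite irreducible Coxeter groups using Lemma~\ref{l:longest}, but the topological argument is much shorter and uniform.
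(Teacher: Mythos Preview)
Your approach is essentially the same as the paper's primary proof: represent $(\Delta_T)^2$ by the Hopf fiber $\gamma(t)=e^{2\pi i t}x_0$ in $M_T$ and observe that it links each reflecting hyperplane exactly once. The paper states the identification of $(\Delta_T)^2$ with the Hopf fiber as a known fact without further justification, so your flagging of this step as the ``principal obstacle'' is fair.

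One correction to your closing remark: the paper also supplies a uniform algebraic alternative that does \emph{not} require the classification of finite Coxeter groups. Writing a reduced expression $f=t_1\cdots t_k$ for $w_T$ and $\bar f$ for its reverse, one has $a_f=a_{\bar f}=\Delta_T$ (since $w_T$ is an involution), and a direct manipulation gives
\[
(\Delta_T)^2=a_f a_{\bar f}=x_k\cdots x_1,\qquad x_i=(a_1\cdots a_{i-1})(a_i)^2(a_1\cdots a_{i-1})^{-1}.
\]
Each $x_i$ is of the form \eqref{e:gepsr} and maps to $e_{r_i}$ in $(PA_T)^\ab$, where $r_i=(t_1\cdots t_{i-1})t_i(t_1\cdots t_{i-1})^{-1}$; since these $r_i$ enumerate $R_T$ without repetition, the formula follows. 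This bypasses entirely the Brieskorn--Deligne identification you were worried about.
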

	
\begin{proof}
Consider the hyperplane arrangement complement, $M_T$.  If $\gD_T$ is defined by \eqref{e:deltaT}, then
the central element $(\gD_T)^2 \in \pi_1(M_T) = PA_T$ is represented by the ``Hopf fiber''.  (More precisely, the Hopf fiber is $\cc^*$ and $(\gD_T)^2$ is represented by $S^1< \cc^*$.)  The Hopf fiber links each reflecting hyperplane once. (To see this, consider the complement in $\cc^n$ of a single linear hyperplane, in which case it is obvious.)  Hence, the class of the Hopf fiber in $H_1(M_T)$  is $\sum_{r\in R_T} e_r$.
\end{proof}

Here is an alternative algebraic proof of the above lemma. 
\begin{proof}[Alternate proof of Lemma~\ref{l:eT}.]
Let $F(T)^{+}$ be the free monoid on $T$. 
For any word $f=t_1\cdots t_l$ which is a reduced expression for the corresponding element $w_f$ of $W_T$, let $a_f$ be the corresponding element of $A_T$, i.e., $a_f=a_{w_f}=a_1\cdots a_l$, where $a_i$ is the Artin generator corresponding to $t_i$. 
Pick a reduced expression $f\in F(T)^{+}$ for the element $w_T$ of longest length in $W_T$. Let $\overline{f}$ denote the reverse of $f$, i.e., if $f=t_1t_2\cdots t_k$, then $\overline{f}=t_k\cdots t_2t_1$. Note that 
	\begin{itemize}
	\item Since $f\cdot \overline{f}=1$ in $W_T$ and since $w_f=w_T$ is an involution (by Lemma~\ref{l:longest}),  $w_f=w_{\overline{f}}$. Since both $f$ and $\overline{f}$ are reduced expressions for $w_T$, the words $f$ and $\overline{f}$  represent the same element  in  $A_T$, namely, $\gD_T$.
	\item The subset of $W_T$ represented by $\{(t_1\cdots t_{i-1})\cdot t_i\cdot(t_1\cdots t_{i-1})^{-1} \}_{1\le i\le k}$ is exactly the set of reflections in $W_T$ (cf.\ \cite[Lemma 2, p.\,6]{bourbaki}).
	\end{itemize}
A simple calculation shows  that $\geps_T = (\gD_T)^2 = a_f\cdot a_{\overline{f}} = x_k x_{k-1}\cdots x_1$ where $x_i=(a_1\cdots a_{i-1})\cdot (a_i)^2 \cdot (a_1\cdots a_{i-1})^{-1}$ for $1\le i\le k$ (cf.\ \eqref{e:gepsr}). By the discussion in subsection \ref{ss:pure}, the images of the $x_i$'s in $(PA_T)^{ab}$ are exactly the $e_r$'s with $r\in R_T$.  The lemma follows.
\end{proof}

Let $j:\zz^{S_\oslash}\to \zz^R=PA^\ab$ be the homomorphism defined  by $T\mapsto e_T$  and extending linearly.
By combining Lemmas~\ref{l:inj} and \ref{l:eT}, we get the following.

\begin{lemma}\label{l:homo}
The homomorphism $j:\zz^{S_\oslash} \to \zz^R$ is injective.
\end{lemma}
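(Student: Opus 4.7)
The plan is to reduce injectivity of $j$ to a triangular matrix argument, using Lemma~\ref{l:eT} to describe $j$ explicitly and combining Lemma~\ref{l:inj} with the defining property \eqref{e:rT} to single out a distinguishing reflection for each $T\in S_\oslash$. By Lemma~\ref{l:eT}, the map $j$ sends the basis element $[T]$ to the characteristic vector $\sum_{r\in R_T} e_r$ of $R_T$ inside $\zz^R$. Composing with the projection $\pi\colon \zz^R\to \zz^{r(S_\oslash)}\cong \zz^{S_\oslash}$ onto the coordinates indexed by the image of $r$ (a bijection by Lemma~\ref{l:inj}), it suffices to show that $\pi\circ j$ is injective.

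In the obvious bases, $\pi\circ j$ is represented by the matrix $M$ whose $(T',T)$-entry equals $1$ when $r(T')\in R_T$ and $0$ otherwise. The heart of the argument is the following claim: for any $T, T'\in S_\oslash$, one has $r(T')\in R_T$ if and only if $T'\subseteq T$. Granting this, after choosing a linear extension of the partial order $\subseteq$ on the (finite) support of any putative kernel element, the relevant finite submatrix of $M$ is upper triangular with $1$s on the diagonal, hence injective; so $\pi\circ j$, and therefore $j$, is injective.

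To verify the claim, note that if $T'\subseteq T$ then $R_{T'}\subseteq R_T$ and so $r(T')\in R_T$. Conversely, if $r(T')\in R_T$, then since $r(T')\in R_{T'}$ we have $r(T')\in R_T\cap R_{T'}=R\cap W_T\cap W_{T'}=R\cap W_{T\cap T'}=R_{T\cap T'}$, using the intersection formula $W_T\cap W_{T'}=W_{T\cap T'}$ recorded in the remark at the end of Section~\ref{ss:artin}. The defining property \eqref{e:rT} says that $r(T')\notin R_{T''}$ for any proper subset $T''\subsetneq T'$; applying this with $T''=T\cap T'$ forces $T\cap T'=T'$, i.e., $T'\subseteq T$. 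I do not expect any serious obstacle here: the lemma is essentially a repackaging of Lemmas~\ref{l:inj} and~\ref{l:eT}, together with the elementary intersection formula for special Coxeter subgroups, into a triangularity statement.
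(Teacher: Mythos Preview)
Your proof is correct and follows essentially the same approach as the paper: both arguments pair each $T\in S_\oslash$ with the reflection $r(T)$ and use the biconditional $r(T')\in R_T \iff T'\subseteq T$ to obtain a unitriangular system with respect to the inclusion order on $S_\oslash$. The paper phrases this via the inner product $e_{T'}\cdot e_{r(T)}$ and an induction peeling off maximal elements, while you phrase it via the projection $\pi$ and an upper-triangular matrix; you are also a bit more explicit than the paper in justifying the ``only if'' direction of the biconditional via the intersection formula $W_T\cap W_{T'}=W_{T\cap T'}$.
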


\begin{proof}
The standard inner product, $(x,y)\mapsto x\cdot y$ on $\zz^R$ is defined on standard basis elements by $(e_r,e_{r'})\mapsto \gd_{r,r'}$.  For each $T\in S_\oslash$ let $r(T)$ be as in \eqref{e:rT} and Lemma~\ref{l:inj}.  Note that for any $T\in S_\oslash$ and $r\in R$, we have $e_T\cdot e_r=1$ if $r\in R_T$ and is $0$ otherwise. It follows that for $T, T'\in S_\oslash$, we have:
	\[
	e_{T'}\cdot e_{r(T)}=
	\begin{cases}
	1,	&\text{ if $T\le T'$,}\\
	0,	&\text{ otherwise.}
	\end{cases}
	\] 
Using this one sees that the $e_T$ are linearly independent in $\zz^R$.  Indeed, if $\sum x_T e_T =0$, after taking the inner product with $e_{r(T)}$, we get $x_T=0$ whenever $T$ is a maximal element of $S_\oslash$. Let $S'_\oslash$ be $S_\oslash$ with its maximal elements removed. Repeat the argument for maximal elements of $S'_\oslash$. After finitely many iterations, we get that $x_T=0$ for all $T\in S_\oslash$.
\end{proof}

\subsection{Standard abelian subgroups}\label{ss:abelian}
Suppose $\ga$ is a $d$-simplex in  $L_\oslash$.  We are going to define a subgroup $H_\ga  <A$ which is free abelian of rank $d+1$.  Suppose $T,T'\in \vertex \ga$ (where $\vertex \ga$ means the vertex set of $\ga$). By Remark~\ref{r:edges},  $T$ and $T'$ are either orthogonal or comparable. In either case $\gd_T$ and $\gd_{T'}$ commute. (This is obvious if  $T$ and $T'$ are orthogonal.  If $T$ and $T'$ are comparable, then without loss of generality $T<T'$, in which case,  $\gd_{T'}$ is centralizes $A_T<A_{T'}$.)  

Define  $H_\ga$ to be the subgroup of $A_{sp (\ga)}$ generated by $\{\gd_T\}_{T\in \vertex \ga}$.  By the previous paragraph, $H_\ga$ is abelian.  It is called the \emph{standard abelian subgroup} associated to $\ga$.  Similarly, define $PH_\ga$ to be the subgroup of finite index in $H_\ga$ generated by $\{(\gD_T)^2\}_{T\in \vertex \ga}$.  Also, denote by $J_\ga$ the image of $PH_\ga$ in $(PA_{sp(\ga)})^\ab$.

\begin{lemma}\label{l:iso}
The natural map $\zz^{\vertex \ga} \to H_\ga$ is an isomorphism; so, $H_\ga$ is free abelian of rank $(d+1)$.
\end{lemma}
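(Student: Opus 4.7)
The plan is to prove injectivity of the natural map $\phi:\zz^{\vertex\ga}\to H_\ga$, $e_T\mapsto\gd_T$; surjectivity is automatic from the definition of $H_\ga$, so injectivity is what gives the rank count $d+1$. Since the generators $\gd_T$ commute (as established in the paragraph preceding the lemma), I will write the group operation on $H_\ga$ additively and show that any relation $\sum_{T\in\vertex\ga} x_T\gd_T = 0$ forces $x_T=0$ for all $T$.

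The key reduction is to pass from $H_\ga$ to the finite-index subgroup $PH_\ga$ generated by $\{(\gD_T)^2\}$, and thence to its image $J_\ga$ inside $(PA_{sp(\ga)})^{\ab}=\zz^{R_{sp(\ga)}}$. Recall that $\gd_T$ equals either $\gD_T$ or $(\gD_T)^2$, so in either case $2\gd_T$ is an integer multiple (by $1$ or $2$) of $(\gD_T)^2$. Thus, doubling a hypothetical relation $\sum x_T\gd_T = 0$ yields a relation $\sum y_T (\gD_T)^2=0$ in $PH_\ga$, where $y_T$ is $x_T$ or $2x_T$ and, in particular, vanishes iff $x_T$ does. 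Applying the abelianization $PA_{sp(\ga)}\to (PA_{sp(\ga)})^{\ab}$ carries $(\gD_T)^2$ to $e_T$, giving $\sum y_T e_T = 0$ in $\zz^{R_{sp(\ga)}}\subseteq\zz^R$.

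Now I invoke Lemma~\ref{l:homo}: the map $j:\zz^{S_\oslash}\to\zz^R$ sending $T\mapsto e_T=\sum_{r\in R_T}e_r$ is injective, so its restriction to $\zz^{\vertex\ga}$ is injective. Hence $y_T=0$ for all $T\in\vertex\ga$, and therefore $x_T=0$ for all $T$, proving $\phi$ is injective. Combined with surjectivity and $\Card\vertex\ga=d+1$, this yields the isomorphism $\zz^{\vertex\ga}\cong H_\ga$ and identifies $H_\ga$ as free abelian of rank $d+1$.

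The only real subtlety is the bookkeeping that lets one pass from a putative relation among the $\gd_T$ (which live in $A$, not in $PA$, and which may equal $\gD_T$ rather than $(\gD_T)^2$) to a relation among the $(\gD_T)^2\in PA_{sp(\ga)}$ so that Lemma~\ref{l:homo} applies; once this is set up, the argument is immediate from the injectivity result already in hand.
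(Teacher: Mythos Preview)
Your proof is correct and follows essentially the same approach as the paper: reduce to injectivity of the composite $\zz^{\vertex\ga}\to PH_\ga\to J_\ga\subset\zz^R$ and invoke Lemma~\ref{l:homo}. The paper's proof compresses this to two sentences, simply asserting that it suffices to check the map $T\mapsto e_T$ is injective; your version is more explicit about the doubling step needed to pass from a relation among the $\gd_T$ in $H_\ga$ to one among the $(\gD_T)^2$ in $PH_\ga$, which the paper leaves implicit.
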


\begin{proof}
It suffices to show that the homomorphism  $\zz^{\vertex \ga}\to (PH_\ga)^\ab$ induced by $T\mapsto e_T$ is an isomorphism.  This is immediate from Lemma~\ref{l:homo}.
\end{proof}

\begin{corollary}\label{c:cd}
If $\dim L = d$, then $\cd A\ge d+1$. If the $K(\pi,1)$-Conjecture holds for $A$, then $\gdim A=d+1$. 
\end{corollary}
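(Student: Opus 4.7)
The plan is to deduce both claims directly from Lemma~\ref{l:iso}, which produces free abelian subgroups of the expected rank, together with the basic information about the Salvetti complex already assembled in the introduction. Since $\dim L = d$, the nerve $L$ contains a $d$-simplex, corresponding to a spherical subset $T < S$ with $\Card T = d+1$. Because $L_\oslash$ is a subdivision of $L$, we have $\dim L_\oslash = \dim L = d$, so we may choose a $d$-simplex $\ga$ of $L_\oslash$ sitting inside such a top-dimensional simplex of $L$ (for instance, built up from the inductive join/cone description of $\gs_\oslash$ given after the definition of $L_\oslash$, each step of which preserves dimension).

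For the first statement, apply Lemma~\ref{l:iso} to $\ga$: the standard abelian subgroup $H_\ga < A$ is free abelian of rank $d+1$. Since cohomological dimension is monotone under passage to subgroups, this gives
\[
\cd A \ge \cd H_\ga = \cd \zz^{d+1} = d+1.
\]
For the second statement, recall from the introduction that the Salvetti complex $\sal S$ is a CW complex of dimension $\dim L + 1 = d+1$, and that the $K(\pi,1)$-Conjecture for $A$ asserts that $\sal S$ is a model for $BA$. Assuming this conjecture, we obtain $\gdim A \le \dim \sal S = d+1$. Combined with the universal inequality $\gdim A \ge \cd A \ge d+1$ from the first part, equality $\gdim A = d+1$ follows.

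There is essentially no obstacle here: everything reduces to Lemma~\ref{l:iso} plus standard facts about $\cd$ and $\gdim$. The only small point worth checking is that a top-dimensional simplex of $L$ really does contain a $d$-simplex of the subdivision $L_\oslash$; this is immediate from the recursive description of $\gs_\oslash$ as an iterated join of cone-offs onto barycenters of irreducible subdiagrams, an operation which preserves dimension.
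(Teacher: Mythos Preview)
Your proof is correct and follows essentially the same approach as the paper: pick a $d$-simplex $\ga$ in $L_\oslash$, invoke Lemma~\ref{l:iso} to get $H_\ga\cong\zz^{d+1}$, use monotonicity of $\cd$ for the lower bound, and use the Salvetti complex for the upper bound on $\gdim A$. The paper's version is terser and does not spell out the existence of a top-dimensional simplex in $L_\oslash$, but otherwise the two arguments are identical.
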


\begin{proof}
If $\ga$ is a $d$-simplex in $L_\oslash$, then $H_\ga\cong \zz^{d+1}$, which has cohomological dimension $d+1$.  Since $\cd A \ge \cd H_\ga$, this proves the first sentence.  If the $K(\pi,1)$-Conjecture holds for $A$, then its Salvetti complex is a model for $BA$ and consequently, $d+1\ge \gdim A\ge \cd A$.
\end{proof}

Let $j:\zz^{S_\oslash}\to \zz^R=PA^\ab$ be the homomorphism defined in the sentence preceding Lemma~\ref{l:homo}. Let   $J:=\Ima j <PA^\ab$ denote its image.  By Lemma~\ref{l:homo}, $j:\zz^{S_\oslash}\to J$ is an isomorphism.  So, for any simplex $\ga$ of $L_\oslash$,  $J_\ga =j(\zz^{\vertex \ga})$. The next result is a key lemma.

\begin{lemma}\label{l:main}
If $\ga$ and $\gb$ are simplices of $L_\oslash$, then $H_\ga \cap H_\gb = H_{\ga\,\cap\, \gb}$.
\end{lemma}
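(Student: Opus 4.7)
One inclusion is immediate: since $\vertex(\ga\cap\gb)=\vertex\ga\cap\vertex\gb$, each generator $\gd_T$ of $H_{\ga\cap\gb}$ lies in both $H_\ga$ and $H_\gb$, giving $H_{\ga\cap\gb}\subseteq H_\ga\cap H_\gb$. The plan is to prove the reverse inclusion by descending to $PA$, then to $PA^{ab}=\zz^R$, applying the linear independence from Lemma~\ref{l:homo}, and finally using a saturation argument. Given $h\in H_\ga\cap H_\gb$, note that $\gd_T\in\{\gD_T,(\gD_T)^2\}$, so $\gd_T^{2n}$ is always a power of $(\gD_T)^2$; hence $h^2$ lies in the finite-index subgroup $PH_\ga$, and for the same reason in $PH_\gb$.

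Next I would project $h^2$ into $PA^{ab}$. The composition $PH_\ga\to(PA_{sp(\ga)})^{ab}=\zz^{R_{sp(\ga)}}\hookrightarrow\zz^R$ sends $(\gD_T)^2\mapsto e_T$, so its image is $J_\ga$, and by Lemmas~\ref{l:iso} and \ref{l:homo} this map is an isomorphism $PH_\ga \xrightarrow{\sim} J_\ga$; the same holds for $\gb$. Thus the image of $h^2$ lies in $J_\ga\cap J_\gb$. I then claim $J_\ga\cap J_\gb=J_{\ga\cap\gb}$: given $x\in J_\ga\cap J_\gb$, write $x=\sum_{T\in\vertex\ga}n_Te_T=\sum_{T'\in\vertex\gb}m_{T'}e_{T'}$, rearrange into a single linear combination with coefficients indexed by $\vertex\ga\cup\vertex\gb$, and invoke the linear independence of $\{e_T\}_{T\in S_\oslash}$ from Lemma~\ref{l:homo}; the coefficients outside $\vertex\ga\cap\vertex\gb$ must vanish, so $x\in J_{\ga\cap\gb}$. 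Pulling back through $PH_\ga\xrightarrow{\sim}J_\ga$, which restricts to an isomorphism $PH_{\ga\cap\gb}\xrightarrow{\sim}J_{\ga\cap\gb}$, yields $h^2\in PH_{\ga\cap\gb}\subseteq H_{\ga\cap\gb}$.

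It remains to upgrade $h^2\in H_{\ga\cap\gb}$ to $h\in H_{\ga\cap\gb}$. By Lemma~\ref{l:iso}, $H_\ga$ is free abelian on the basis $\{\gd_T\}_{T\in\vertex\ga}$, and $H_{\ga\cap\gb}$ is the direct summand spanned by the sub-basis indexed by $\vertex\ga\cap\vertex\gb$; such a subgroup is saturated, so $h^2\in H_{\ga\cap\gb}$ forces $h\in H_{\ga\cap\gb}$. The main subtlety to guard against is that $\gd_T$ itself need not lie in $PA$ (it does only when the diagram involution $\gi_T$ is trivial), which is exactly why the detour through $h^2$ is needed to safely pass to $PA$ and its abelianization; the final saturation step then recovers $h$ from $h^2$ without loss of information.
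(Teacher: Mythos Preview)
Your proof is correct and follows essentially the same route as the paper's: reduce to the pure subgroups $PH_\ga$, push into $PA^{\ab}=\zz^R$ via the abelianization, use the linear independence of the $e_T$ (Lemma~\ref{l:homo}) to conclude $J_\ga\cap J_\gb=J_{\ga\cap\gb}$, and then lift back. The only organizational difference is that the paper compresses your explicit $h\mapsto h^2$ and saturation steps into the single opening sentence ``Since $PH_\ga$ has finite index in $H_\ga$, the lemma will follow once we show $PH_\ga\cap PH_\gb=PH_{\ga\cap\gb}$,'' whereas you spell out why that reduction is valid.
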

\begin{proof}
Since $PH_\ga$ has finite index in $H_\ga$,
the lemma will follow once we show that $PH_\ga\cap PH_\gb = PH_{\ga\,\cap\, \gb}$. 
Let $h:PA\to PA^{ab}$ be the abelianization homomorphism. Recall that $h(PH_{\alpha})=J_{\alpha}$ and $h|_{PH_{\alpha}}$ is injective (Lemma \ref{l:iso}). It follows that if $PH_\ga\cap PH_\gb \supsetneq PH_{\ga\,\cap\, \gb}$, then $J_{\alpha}\cap J_{\gb}\supsetneq J_{\ga\,\cap\, \gb}$. So it suffices to show $J_{\alpha}\cap J_{\gb}=J_{\ga\,\cap\, \gb}$. But this is immediate from the equality:
	\[
	\zz^{\vertex \ga} \cap \zz^{\vertex \gb} = \zz^{\vertex (\ga\,\cap\,\gb)},
	\] 
after using the isomorphism $j$ to transport this equality to an equality involving subgroups of $J$.
\end{proof}

We record the following lemma concerning the geometry of standard abelian subgroups, eventhough we do not need  it in the sequel.
\begin{lemma}
	\label{l:qi}
For any simplex $\alpha\subset L_\oslash$, $H_{\alpha}$ is quasi-isometrically embedded in $A$.
\end{lemma}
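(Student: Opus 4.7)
The plan is to factor the inclusion $H_\alpha \hookrightarrow A$ through $A_{sp(\alpha)}$ and prove that both factors are quasi-isometric embeddings. Since $\alpha$ is nested, its support $sp(\alpha)$ is spherical by definition, so $A_{sp(\alpha)}$ is a spherical-type Artin group, $W_{sp(\alpha)}$ is finite, and $PA_{sp(\alpha)}$ is of finite index in $A_{sp(\alpha)}$. This reduces matters to proving two QI embeddings: $H_\alpha \hookrightarrow A_{sp(\alpha)}$ and $A_{sp(\alpha)} \hookrightarrow A$.

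For the first embedding I would pass to the finite-index subgroup $PH_\alpha < H_\alpha$ and show instead that $PH_\alpha$ is quasi-isometrically embedded in $PA_{sp(\alpha)}$, from which the original claim follows. The abelianization $h\colon PA_{sp(\alpha)} \to (PA_{sp(\alpha)})^{ab} = \zz^{R_{sp(\alpha)}}$ is $1$-Lipschitz for corresponding finite generating sets, and by Lemma~\ref{l:iso} it restricts to an isomorphism $PH_\alpha \xrightarrow{\cong} J_\alpha$. Since the word metric on the finitely generated abelian group $\zz^{R_{sp(\alpha)}}$ is bi-Lipschitz equivalent to the $\ell^1$-norm, and the restriction of this $\ell^1$-norm to the finitely generated subgroup $J_\alpha$ is in turn bi-Lipschitz to the intrinsic $\ell^1$-norm on $J_\alpha$ with basis $\{e_T\}_{T\in\vertex\alpha}$, chaining these comparisons yields, for $g\in PH_\alpha$,
\[
|g|_{PH_\alpha}\ \asymp\ \|h(g)\|_{\ell^1}\ \asymp\ |h(g)|_{(PA_{sp(\alpha)})^{ab}}\ \le\ |g|_{PA_{sp(\alpha)}}\ \le\ |g|_{PH_\alpha},
\]
which gives the desired QI embedding.

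For the second embedding $A_{sp(\alpha)} \hookrightarrow A$, I would invoke the fact that a standard parabolic subgroup of an Artin group is quasi-isometrically embedded. In the spherical-type case, which applies here, this can be established by Garside-theoretic methods: the Garside normal form on $A_{sp(\alpha)}$ extends to one on $A$ with length comparable up to a bi-Lipschitz constant. Alternatively, one may appeal to more recent general results on the QI behavior of parabolic subgroups in arbitrary Artin groups.

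The main obstacle is the second step. The first is a formal consequence of Lemma~\ref{l:iso} and the elementary comparability of word metrics with $\ell^1$-norms on finitely generated abelian groups. By contrast, the quasi-isometric embedding of parabolic subgroups of Artin groups is a genuinely nontrivial theorem drawing on the combinatorial and geometric structure theory of Artin groups; fortunately only the spherical-type case is needed, where the structure is most transparent.
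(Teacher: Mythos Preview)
Your overall strategy---factoring the inclusion through the spherical special subgroup $A_{sp(\ga)}$ and handling the two factors separately---is exactly the decomposition the paper uses. The second step also matches: the paper invokes the convexity of special subgroups in $A$, citing Charney--Paris \cite{cp}, which is the result you gesture at. (Your remark that ``the Garside normal form on $A_{sp(\ga)}$ extends to one on $A$'' is not quite right when $A$ itself is not spherical, but your alternative reference to general results on parabolic subgroups is precisely what is needed, and indeed is what \cite{cp} supplies.)

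Where you genuinely diverge is in the first step. The paper handles $H_\ga \hookrightarrow A_{sp(\ga)}$ by a chain of heavy results: Charney's theorem that spherical Artin groups are biautomatic, Gersten--Short's theorem that biautomatic implies semihyperbolic, and then the Algebraic Flat Torus Theorem of \cite[p.~475]{bh}, which says every finitely generated abelian subgroup of a semihyperbolic group is undistorted. Your argument is much more elementary and self-contained: you exploit the fact (Lemma~\ref{l:iso}) that the abelianization $h$ of $PA_{sp(\ga)}$ is already injective on $PH_\ga$, so the $1$-Lipschitz map $h$ provides the lower bound on word length directly. This bypasses biautomaticity and semihyperbolicity entirely. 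The paper's route has the advantage of applying to \emph{any} abelian subgroup of $A_{sp(\ga)}$, not just the standard ones; your route has the advantage of using only what the paper has already proved, plus the trivial fact that subgroups of finitely generated abelian groups are undistorted.
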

	
\begin{proof}
First, consider the case when $A$ is spherical. Charney \cite{charney} proved that spherical Artin groups are biautomatic and Gersten-Short \cite{gs91} proved that biautomatic groups are semihyperbolic.  The Algebraic Flat Torus Theorem from \cite[p.\ 475]{bh} states that  every monomorphism of a finitely generated abelian group to a semihyperbolic group is a quasi-isometric embedding. The general case follows from the spherical case, since any standard abelian subgroup is contained in a spherical special subgroup and special subgroups are convex in $A$ \cite{cp}. 
\end{proof}

\subsection{Configurations of flats}\label{ss:flats}
\paragraph{Octahedralization} 
For a finite set $V$,  let $O(V)$ denote the boundary complex of the octahedron (or cross polytope) on $V$.
In other words, $O(V)$ is the simplicial complex with vertex set $V\times \{\pm 1\}$ such that a subset $\{(v_0, \geps_0),\dots, (v_k, \geps_k)\}$ of $V\times \{\pm1\}$ spans a $k$-simplex if and only if its first coordinates $v_0,\dots v_k$ are distinct.
Projection to the first factor $V\times \{\pm1\}\to V$ gives a simplicial projection $p:O(V)\to \gs(V)$, where $\gs(V)$ means the full simplex on $V$.
Any finite simplicial complex $K$ with vertex set $V$ is a subcomplex of $\gs(V)$. Its
 \emph{octahedralization} $OK$ is defined to be the inverse image of $K$ in $O(V)$:
	\begin{equation*}\label{e:ok}
	OK:= p^\minus (K) \le O(V).
	\end{equation*}
Thus, each simplex of $OK$ is of the form $(\gs,\geps)$, where $\gs$ is a simplex of $K$ and $\geps:\vertex{\gs}\to \{\pm 1\}$ is a function.
(This terminology comes from \cite{ados} or \cite{do12}*{\S 8}.) We also say that $OK$ is the result of  \emph{doubling the vertices of $K$}.  

Given a finite simplicial complex or metric space $K$, put
	\begin{equation}\label{e:cone}
	\cone(K):=K\times [0,\infty)\,/\, K\times \{0\}.
	\end{equation}
If $K$ is a metric space, then there is an induced metric on $\cone(K)$ which gives it the structure of a \emph{euclidean cone} (cf.\ \cite{bh}*{p.\,60}).  The idea is to use the formula  for the euclidean metric on $\rr^n$ in polar coordinates - the coordinate in $K$ is the ``angle'' (and the metric on $K$ gives the angular distance) and $[0,\infty)$ gives the radial coordinate.  For example, if $K$ is the round $(n-1)$-sphere, then $\cone(K)$ is isometric to $\ee^n$; if $K$ is a spherical $(n-1)$-simplex, then $\cone(K)$ is isometric to the corresponding  sector in $\ee^n$.

A spherical $(n-1)$-simplex $\gs$ is \emph{all right} if its edges all have length $\pi/2$. This means that $\gs$ is isometric to the intersection of the unit sphere in $\rr^n$ with the positive orthant $[0,\infty)^n$. Its octahedralization $O\gs$ is a triangulation of $S^{n-1}$ and with the induced all right piecewise spherical metric, it is isometric to the round sphere.  Hence,  $\cone (O\gs)$ is isometric to euclidean space.

Suppose $K$ is given its all right piecewise spherical structure in which each edge has length $\pi/2$.  Then $\cone(K)$ is isometric to a configuration of the positive orthants $\cone (\gs)$ and $\cone (OK)$ is isometric to a configuration of euclidean spaces all passing through the cone point. (There is one euclidean space for each $\gs \in K$.)  The link of the cone point in $\cone (OK)$ is $OK$.  By Gromov's Lemma (cf.\ \cite{dbook}*{Appendix I.6, p.\,516}), $\cone(OK)$ is $\cat (0)$  if and only if $OK$ is a flag complex.

\paragraph{Coordinate subspace arrangements}  Given a finite simplicial complex $K$, one can define an arrangement of coordinate subspaces in the euclidean space $\rr^{\vertex K}$.   Let $\cs (K)$ denote the poset of simplices in $K$.  For each $\gs\in \cs(K)$, there is the subspace $\rr^{\vertex \gs} \le \rr^{\vertex K}$.  The \emph{coordinate subspace arrangement} corresponding  to $K$ is the collection of subspaces, $\ca(K):=\{\rr^{\vertex \gs}\}_{\gs\in \cs (K)}$.  The intersection of these subspaces with the integer lattice $\zz^{\vertex K}$ gives the \emph{coordinate arrangement of free abelian subgroups} associated to $K$, 
	\begin{equation}\label{e:coord}
	\ca_\zz(K):=\{\zz^{\vertex \gs}\}_{\gs\in \cs(K)}. 
	\end{equation}
The union of the elements of $\ca(K)$ or $\ca_\zz(K)$ is denoted $\ee^{OK}$ or $\zz^{OK}$, respectively:
	\begin{equation}\label{e:zok}
	\ee^{OK}:= \bigcup_{\gs\in \cs(K)} \rr^{\vertex \gs}, \qquad\quad 
	\zz^{OK}:= \bigcup_{\gs\in \cs(K)} \zz^{\vertex \gs}.
	\end{equation}
Let $\{b_v\}_{v\in \vertex K}$ be the standard basis for $\rr^{\vertex K}$ so that $\{b_v\}_{v\in \vertex \gs}$ is the standard basis for $\rr^{\vertex \gs}$.  The positive cone spanned by standard basis $\{b_v\}_{v\in \vertex \gs}$ is the \emph{positive orthant}, $[0,\infty)^{\vertex \gs}<\rr^{\vertex \gs}$.  Similarly, if $(\gs,\geps)$ is a simplex of $OK$, where $\geps$ is a choice of signs $v\mapsto \geps_v\in \{\pm1\}$, we get the orthant spanned by $\{\geps_v b_v\}_{v\in \vertex \gs}$.

The proof of the next lemma is immediate from the definitions.

\begin{lemma}\label{l:arrang}
Let $\{h_v\}_{v\in \vertex K}$ be a collection of elements in some group $\pi$ indexed by vertices of a finite simplicial complex $K$. Suppose $h_{v}$ and $h_{v'}$ commute whenever $v$ and $v'$ are joined by an edge. For a simplex $\sigma$ in $K$, let $H_{\sigma}\le \pi$ denote the image of the homomorphism $\zz^{\vertex \sigma}\to \pi$ induced by $v\mapsto h_v$. Suppose that
	\begin{itemize}
	\item the homomorphism $\zz^{\vertex \sigma}\to \pi$ is injective,
	\item the conclusion of Lemma~\ref{l:main} holds, i.e., $H_\gs\cap H_\gt=H_{\gs\,\cap\,\gt}$ for all $\gs$, $\gt$ in $K$.
	\end{itemize}
Then the collection of free abelian subgroups $\ca=\{H_\gs\}_{\gs\in \cs(K)}$ is isomorphic to the coordinate arrangement of abelian groups in $\zz^{OK}$ in the obvious sense.
\end{lemma}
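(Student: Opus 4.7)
The plan is to construct an explicit bijection $\varphi\colon \zz^{OK}\to \bigcup_{\gs\in \cs(K)} H_\gs$ that restricts to the given group isomorphism $\zz^{\vertex \gs}\xrightarrow{\sim} H_\gs$ on each piece. Concretely, given $x\in\zz^{OK}$, I would choose any simplex $\gs$ with $x\in \zz^{\vertex \gs}$, write $x=\sum_{v\in \vertex\gs} n_v\, v$, and set $\varphi(x):=\prod_{v\in\vertex\gs} h_v^{n_v}\in H_\gs\subset\pi$. Once $\varphi$ is shown to be a well-defined bijection, the simplex-indexed family of restrictions is exactly the claimed isomorphism of arrangements.

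First I would verify well-definedness. If $x\in \zz^{\vertex \gs}\cap \zz^{\vertex \gt}$, then the support of $x$ is contained in $\vertex \gs\cap \vertex \gt=\vertex(\gs\cap \gt)$, since $K$ is a simplicial complex. The two assignments defining $\varphi(x)$ via $\gs$ or via $\gt$ both factor through the common inclusion $\zz^{\vertex(\gs\cap \gt)}\hookrightarrow \zz^{\vertex \gs}$ (resp.\ $\hookrightarrow\zz^{\vertex \gt}$), and on that common subgroup both are given by $v\mapsto h_v$. Hence the two definitions agree, so $\varphi$ is well-defined, and its restriction to each $\zz^{\vertex\gs}$ is by hypothesis an injective homomorphism onto $H_\gs$.

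Next I would prove global injectivity of $\varphi$, which is where the hypothesis $H_\gs\cap H_\gt = H_{\gs\cap\gt}$ is essential. Suppose $x\in\zz^{\vertex \gs}$ and $y\in\zz^{\vertex \gt}$ satisfy $\varphi(x)=\varphi(y)=g$. Then $g\in H_\gs\cap H_\gt=H_{\gs\,\cap\,\gt}$, so there exists $z\in\zz^{\vertex(\gs\,\cap\,\gt)}$ with $\varphi(z)=g$. Because $\zz^{\vertex \gs}\to H_\gs$ is injective and both $x$ and $z$ map to $g$ inside $H_\gs$, we conclude $x=z$ in $\zz^{\vertex \gs}$; symmetrically $y=z$. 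Hence $x=y=z\in\zz^{\vertex(\gs\,\cap\,\gt)}\subset \zz^{OK}$. Surjectivity of $\varphi$ onto $\bigcup H_\gs$ is immediate from the surjectivity of each piece.

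The only step requiring real thought is the compatibility of the two hypotheses — that they together force both the pieces and their intersections to match up — but this is just the injectivity argument above. No step looks like a genuine obstacle; the content of the lemma is essentially the observation that the axioms listed are exactly what is needed to promote the pointwise data $\{h_v\}$ to an isomorphism of arrangements in the obvious sense, namely a bijection respecting the simplex-indexed stratifications and inducing the given group isomorphisms on each stratum.
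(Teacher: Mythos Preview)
Your proof is correct and is essentially a careful unpacking of what the paper asserts without argument: the paper simply states that ``the proof of the next lemma is immediate from the definitions.'' Your construction of the global bijection $\varphi$, together with the verification that well-definedness follows from $\zz^{\vertex\gs}\cap\zz^{\vertex\gt}=\zz^{\vertex(\gs\cap\gt)}$ and that global injectivity follows from the hypothesis $H_\gs\cap H_\gt=H_{\gs\cap\gt}$, is exactly the content hidden in that phrase.
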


By Lemmas~\ref{l:iso} and \ref{l:main}, the  lemma above applies to the  simplicial complexes $K= L_\oslash$ and $OK=OL_\oslash$.  For each $\ga\in L_\oslash$, let
	\begin{equation*}\label{e:basis}
	\cb_\ga= \{\gd_T\}_{T\in \vertex \ga}
	\end{equation*}
 be the standard basis for $H_\ga$ and let $\rr^{\vertex \ga}$ be the euclidean space with basis $\cb_\ga$, i.e., $\rr^{\vertex \ga}=H_\ga\otimes \rr$. 
 
\begin{remark}
These notions also make sense when $K$ is infinite and $\mathcal{A}$ is an infinite arrangement of free abelian subgroups in $\pi$. If $\mathcal{A}$ is invariant under conjugation, then the conjugation action induces an action $\pi$ on $OK$. In several cases, such action plays an important role of understanding the group $\pi$. For example, if $\pi$ is the mapping class group of closed hyperbolic surface and $\mathcal{A}$ is the collection of abelian subgroups generated by Dehn twists, then $K$ is the curve complex of the underlying surface. If $\pi$ is a right-angled Artin group and $\mathcal{A}$ is the collection standard abelian subgroups and their conjugations, then $K$ is the flag complex of the ``extension graph,'' which was introduced by Kim and Koberda \cite{kimko}.
\end{remark}

\section{The obstructor dimension}\label{s:obdim}
In this section we review some definitions from \cite{bkk}.
\subsection{The meaning of ``$K\subset \partial \pi$''}\label{ss:meaning}
For a finite simplicial complex $K$, define $\cone(K)$ as in \eqref{e:cone}. We give $\cone(K)$ the euclidean cone metric after choosing a piecewise euclidean metric on $K$.  Next we triangulate $\cone(K)$ so that 
\begin{itemize}
\item
for each simplex $\gs$ of $K$, $\cone (\gs)$ is a subcomplex,
\item
the edge-length metric on the $0$-skeleton, $\cone(K)^0$, is bi-Lipschitz equivalent to the metric on $\cone(K)^0$ induced from the euclidean cone metric.
\end{itemize}  

\begin{definition}\label{d:boundary}
Suppose $\pi$ is a discrete, finitely generated group equipped with a word metric.  As in \cite[p.\ 228]{bkk}, write ``$K\subset \partial \pi$'' to mean that there is a proper, expanding, Lipschitz map, $\cone (K)^0\to \pi$.
\end{definition}

\noindent
The terms ``proper'' and ``Lipschitz'' have their usual meanings.  The term ``expanding'' needs further explanation, which is given below.

Two functions $h_1:A_1\to B$ and $h_2:A_2\to B$ to a metric space $B$ \emph{diverge} from one another if for every $D>0$ there are compact subsets $C_i\le A_i$, $i=1,2$, such that $h_1(A_1-C_1)$ and $h_2(A_2-C_2)$ are of distance $>D$ apart. 
Following \cite{bkk}, call a proper map $h:\cone(K)\to B$ is \emph{expanding} if for any two disjoint simplices $\gs$ and $\gt$ in $K$, the 
maps $h\vert_{\cone(\gs)}$ and $h\vert_{\cone(\gt)}$ diverge.  Similarly,  if $\cone(K)^0$ is equipped with an edge-length metric  as above, then $h:\cone(K)^0\to B$ is \emph{expanding} if for disjoint simplices $\gs$, $\gt$ in $K$, $h\vert_{\cone(\gs)^0}$ and $h\vert_{\cone(\gt)^0}$ diverge.

Note that the meaning of ``$K\subset \partial \pi$'' in Definition \ref{d:boundary} does not depend on choices we made in the beginning of this subsection.

It is shown in \cite{bkk} that if the universal cover of $B\pi$ has a $\mathcal{Z}$-structure (i.e., a $\mathcal{Z}$-set compactification) with boundary denoted  $\partial \pi$ and if $K\subset \partial \pi$, then there is a proper, expanding, Lipschitz map $\cone(K)\to \pi$.  In other words, $K\subset \partial \pi$  $\implies$ ``$K\subset \partial \pi$''.

\subsection{Obstructors}\label{ss:obstructor}
Let $K$ be a finite simplicial complex.  The \emph{deleted simplicial product}, $(K\times K)-\gD$, is the union of all cells in $K\times K$ of the form $\gs\times \gt$, where $\gs$ and $\gt$ are disjoint closed simplices in $K$.  The \emph{configuration space}, $\cac(K)$ is the quotient of $(K\times K)-\gD$ by the free involution which switches the factors.  Let $c:\cac(K)\to \rr P^\infty$ classify the double cover. If $w\in H^1(\rr P^\infty;\zz/2)$ is the generator, then the \emph{van Kampen obstruction} for $K$ in degree $m$ is the cohomology class $\vktwo^m(K)\in H^m(\cac(K);\zz/2)$ defined by 
	\begin{equation*}\label{e:vk}
	\vktwo^m(K) := c^*(w^m).
	\end{equation*}
It  is an obstruction to embedding $K$ in $S^m$.  Moreover, in the case $m=2\dim K$ with $\dim K\neq 2$, it (or actually an integral version of it) is the complete obstruction to embedding $K$ in $S^m$. If $\vktwo^m(K)\neq 0$, then $K$ is an \emph{$m$-obstructor}.

The main results of \cite{ados} concern the van Kampen obstruction $\vktwo^m(OK)$ of an octahedralization.  From a nonzero $\zz/2$-cycle  $M\in Z_k(K;\zz/2)$ and a $k$-simplex $\gD$ in the support of $M$, one produces a $2k$-chain $\gO\in C_{2k}(\cac(OK);\zz/2)$ on which one can evaluate the van Kampen obstruction.  If the pair $(M,\gD)$ satisfies a certain technical condition, called the ``$*$-condition''  in \cite{ados}*{p.\,122}, then $\gO$ is a cycle and $\vktwo^{2k}(OK)$ evaluates nontrivially on it.  This gives the following.

\begin{Theorem}\label{t:ados}\textup{(\cite{ados}*{Thm.~5.2, p.\ 121}).}
Given a simplicial complex $K$, suppose there is a $k$-cycle $M\in Z(K;\zz/2)$ and a $k$-simplex $\gD$ in $M$ such that $(M,\gD)$ satisfies the $*$-condition.  Then $\vktwo^{2k}(OK)\neq 0$, i.e., $OK$ is a $2k$-obstructor. 
\end{Theorem}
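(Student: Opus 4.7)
The plan is to produce an explicit $2k$-dimensional mod-$2$ cycle $\gO\in Z_{2k}(\cac(OK);\zz/2)$ built from the data $(M,\gD)$, and then to show that the pairing $\langle \vktwo^{2k}(OK),[\gO]\rangle$ is nonzero. Since $\vktwo^{2k}(OK)=c^*(w^{2k})$ for the classifying map $c:\cac(OK)\to \rr P^\infty$ of the double cover $(OK\times OK)-\gD \to \cac(OK)$, a nonzero pairing on a single cycle is enough.

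For the construction, I observe that whenever $\gs$ is a $k$-simplex of $K$ and $\geps\colon\vertex(\gs)\to\{\pm 1\}$ is any sign function, the two $k$-simplices $(\gs,\geps)$ and $(\gs,-\geps)$ of $OK$ are disjoint (they share no vertex in $\vertex(K)\times\{\pm 1\}$), so $(\gs,\geps)\times(\gs,-\geps)$ is a $2k$-cell of the deleted product. Summing all such cells over the simplices $\gs$ in the support of $M$ and all sign functions $\geps$, together with correction cells of the form $(\gs_1,\geps_1)\times(\gs_2,\geps_2)$ where $\gs_1,\gs_2$ are incident simplices of $M$ with complementary sign patterns on their common vertices, I obtain a chain on $(OK\times OK)-\gD$ that is invariant under the swap and hence descends to a chain $\gO$ on $\cac(OK)$.

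The next step is to verify $\partial\gO=0\pmod 2$. A codimension-one face of $(\gs,\geps)\times(\gs,-\geps)$ has the shape $(\gt,\geps|_\gt)\times(\gs,-\geps)$ or its mirror, where $\gt\subset\gs$ is a codimension-one face. Boundary terms of the first kind match, across the various top-dimensional $\gs$ appearing in $M$, precisely by the cycle condition $\partial M=0$ mod $2$. Terms coming from cells with $\gs_1,\gs_2$ of different dimensions (i.e.\ where the two factors are incident but not equal in $K$) are in general nontrivial; the $*$-condition at $\gD$ is designed to furnish exactly the incidence combinatorics needed to pair up and cancel these residual boundary strata. This second cancellation is the technical heart of the proof, and it is where $\gD$ enters essentially.

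For the non-vanishing of $\langle c^*(w^{2k}),[\gO]\rangle$, I would use the standard product formula for the van Kampen class: the pairing reduces mod $2$ to a count of cells of $\gO$ whose projection to $\cac(OK)$ lies over the distinguished simplex $\gD$. Among the summands entering $\gO$, the cells $(\gD,\geps)\times(\gD,-\geps)$ contribute one unordered pair for each sign function on $\vertex(\gD)$, while all correction cells coming from the $*$-condition lie strictly away from the diagonal configurations at $\gD$ and therefore contribute in even multiplicity. The parity count at $\gD$ is odd, so the pairing is nonzero. The main obstacle I expect is Step~2: keeping track of sign functions on octahedralized simplices, and showing cleanly that the $*$-condition produces exactly the right matching of uncancelled boundary strata, requires a careful combinatorial bookkeeping analogous to the one carried out in \cite{ados}*{\S5}.
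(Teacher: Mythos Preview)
The paper does not prove this theorem; it quotes it from \cite{ados} and offers only the one-sentence outline preceding the statement: build a $2k$-chain $\gO$ from $(M,\gD)$, observe that the $*$-condition makes $\gO$ a cycle, and check that $\vktwo^{2k}(OK)$ pairs nontrivially with it. Your proposal follows exactly this outline, so at the level of strategy you are aligned with both the paper's sketch and the argument in \cite{ados}.

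That said, as written your proposal is a plan rather than a proof, and the gaps are in precisely the places where the actual work lies. First, your description of $\gO$ is not a definition: you name the ``diagonal'' cells $(\gs,\geps)\times(\gs,-\geps)$ but then gesture at unspecified ``correction cells of the form $(\gs_1,\geps_1)\times(\gs_2,\geps_2)$ where $\gs_1,\gs_2$ are incident simplices of $M$ with complementary sign patterns.'' Without saying exactly which such cells appear and with what combinatorial rule, there is no chain to take the boundary of. Second, you never state the $*$-condition, so the claim that it ``is designed to furnish exactly the incidence combinatorics needed'' is an appeal to authority, not an argument. Third, your parity count for $\langle c^*(w^{2k}),[\gO]\rangle$ asserts that the diagonal cells over $\gD$ contribute oddly while the correction cells contribute evenly, but the number of unordered pairs $\{(\gD,\geps),(\gD,-\geps)\}$ is $2^k$, which is even for $k\ge 1$; the actual evaluation in \cite{ados} is more delicate than a raw count of antipodal pairs. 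You acknowledge in your last sentence that the real content is the bookkeeping of \cite{ados}*{\S5}; since the paper itself defers to that reference, the honest move here is to do the same rather than present a sketch whose key steps are placeholders.
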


It is then remarked in \cite{ados} that when $k=\dim K$ and $K$ is a flag complex, the $*$-condition is automatically satisfied.  It also is proved in \cite[Thm.~5.1]{ados} that when  there is no nontrivial cycle on $K$ in the top degree, $\vktwo^{2k}(OK)=0$.  So, the following is a corollary of Theorem~\ref{t:ados}.

\begin{Theorem}\label{t:ados1}\textup{(\cite{ados}*{Thm.~5.4, p.\ 121}).}
Suppose $K$ is a $d$-dimensional flag complex.  Then $\vktwo^{2d}(OK)\neq 0$ if and only if $H_d(K;\zz/2)\neq 0$.  In other words, $OK$ is a $2d$-obstructor if and only if $H_d(K;\zz/2)\neq 0$.
\end{Theorem}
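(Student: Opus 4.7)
The plan is to reduce both implications to results already in hand: the backward direction will follow from Theorem~\ref{t:ados}, and the forward direction from the companion vanishing result [Thm~5.1] of \cite{ados} quoted in the paragraph just before the statement.

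For the backward direction, assume $H_d(K;\zz/2)\neq 0$ and pick a non-trivial cycle $M\in Z_d(K;\zz/2)$ together with any $d$-simplex $\Delta$ in the support of $M$. By Theorem~\ref{t:ados}, it suffices to verify the $*$-condition of \cite[p.\,122]{ados} for the pair $(M,\Delta)$. When the cycle already lives in the top degree $d=\dim K$, every simplex in the support of $M$ is maximal, so no genuine extension of a chosen pair of disjoint simplices is possible or required. The flag hypothesis then does the rest: if two disjoint $d$-simplices of $M$ shared extra edges, flagness would enlarge them into a single simplex of dimension $>d$, contradicting $d=\dim K$. Thus the $*$-condition has no non-trivial content to check in the top-degree flag case, and Theorem~\ref{t:ados} delivers $\vktwo^{2d}(OK)\neq 0$.

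For the forward direction I argue the contrapositive: if $H_d(K;\zz/2)=0$ then $\vktwo^{2d}(OK)=0$. Since $\dim OK=\dim K=d$, every $2d$-cell of the deleted product $(OK\times OK)-\gD$ has the form $(\gs,\geps)\times(\gt,\eta)$ with $\gs,\gt$ top-dimensional simplices of $K$ and $\gs\cap\gt=\emptyset$ in $K$. Consequently any $2d$-chain on $\cac(OK)$ projects, by summing over sign choices and quotienting by the swap involution, to a symmetric $(d,d)$-chain on $K\times K$ supported on disjoint pairs. A direct cellular computation shows that pairing $\vktwo^{2d}(OK)$ against any $2d$-cycle factors through this projection and lands in a group built from $H_d(K;\zz/2)\otimes H_d(K;\zz/2)$, which vanishes by hypothesis.

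The main obstacle is the forward direction: one must track the interaction among the sign data on doubled vertices, the swap $\zz/2$-action on $\cac(OK)$, and the classifying map $c:\cac(OK)\to\rr P^\infty$, then combine them into a clean factorization through the top-degree homology of $K$. Once that factorization is in place, the vanishing is automatic. The backward direction is essentially a remark: the $*$-condition degenerates in this setting, so Theorem~\ref{t:ados} applies directly.
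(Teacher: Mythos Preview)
Your plan matches the paper exactly: the paper does not give an independent proof, but deduces the backward direction from Theorem~\ref{t:ados} together with the remark in \cite{ados} that the $*$-condition is automatic for top-degree cycles in a flag complex, and simply cites \cite{ados}*{Thm.~5.1} for the forward direction. Your backward direction follows this; the phrasing of your justification for the $*$-condition is a bit garbled (two \emph{disjoint} simplices cannot ``share extra edges''; presumably you mean edges running between their vertex sets), but the intended content agrees with the paper's one-line remark.

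The gap is in your forward direction. Your plan correctly says to invoke \cite{ados}*{Thm.~5.1}, which is precisely what the paper does; but you then abandon that plan and attempt an independent sketch. That sketch is not a proof. The claimed ``projection'' from $2d$-chains on $\cac(OK)$ to ``symmetric $(d,d)$-chains on $K\times K$'' is not defined, and the key assertion---that pairing $\vktwo^{2d}(OK)$ against any $2d$-cycle factors through a group built from $H_d(K;\zz/2)\otimes H_d(K;\zz/2)$---is stated as the output of a ``direct cellular computation'' that is never carried out. In particular, you have not explained why summing over sign data $\geps,\eta$ is compatible with the boundary operator on $\cac(OK)$, nor why the pullback $c^*(w^{2d})$ (which is defined via the classifying map of the double cover, not via any map to $K\times K$) should see only the resulting projected class. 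Either cite \cite{ados}*{Thm.~5.1} as your plan (and the paper) does and delete the sketch, or supply the missing computation in full.
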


An immediate corollary is the following.

\begin{Theorem}\label{t:ados2}
Suppose a $d$-dimensional complex $L$ is the nerve of $(W,S)$. Then $OL_\oslash$  is a $2d$-obstructor if and only if $H_d(L;\zz/2)\neq 0$.
\end{Theorem}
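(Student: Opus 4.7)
The plan is to deduce Theorem~\ref{t:ados2} as an immediate corollary of Theorem~\ref{t:ados1} applied to the simplicial complex $K = L_\oslash$. To do this, I need to verify the two hypotheses of Theorem~\ref{t:ados1} for $L_\oslash$: namely that it is a $d$-dimensional flag complex, and then relate its top $\zz/2$-homology to that of $L$.

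First, I would observe that $L_\oslash$ is flag by Lemma~\ref{l:flag}. Next, since $L_\oslash$ is defined as a subdivision of the nerve $L$ (the subdivision procedure replaces each simplex $\gs$ of $L$ by $\gs_\oslash$, which is a triangulation of $\gs$ as a geometric simplex, via the inductive join-and-cone description given after Lemma~\ref{l:flag}), the underlying topological space of $L_\oslash$ is homeomorphic to that of $L$. In particular $\dim L_\oslash = \dim L = d$, so $L_\oslash$ is a $d$-dimensional flag complex. By homotopy invariance of simplicial homology under subdivision, we have a canonical isomorphism
\[
H_d(L_\oslash;\zz/2) \;\cong\; H_d(L;\zz/2).
\]

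With these two facts in hand, Theorem~\ref{t:ados1} applied with $K = L_\oslash$ yields the equivalence: $\vktwo^{2d}(OL_\oslash)\neq 0$ if and only if $H_d(L_\oslash;\zz/2)\neq 0$, which by the isomorphism above is equivalent to $H_d(L;\zz/2)\neq 0$. Unwinding the definition from subsection~\ref{ss:obstructor}, the nonvanishing of $\vktwo^{2d}(OL_\oslash)$ is exactly the statement that $OL_\oslash$ is a $2d$-obstructor, giving the conclusion.

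There is no real obstacle in this argument beyond the two checks above, both of which are essentially bookkeeping: the flag property of $L_\oslash$ is handed to us by Lemma~\ref{l:flag}, and the identification $H_d(L_\oslash;\zz/2) \cong H_d(L;\zz/2)$ is a standard consequence of $L_\oslash$ being a subdivision of $L$. All of the substantive combinatorial and topological work (the $*$-condition for flag complexes in top degree, and the vanishing of $\vktwo^{2d}(OK)$ when there is no top-dimensional cycle) is already absorbed into Theorem~\ref{t:ados1}, so Theorem~\ref{t:ados2} is a direct specialization.
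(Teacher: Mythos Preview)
Your argument is correct and follows exactly the paper's approach: the paper's own proof is the single line ``Theorem~\ref{t:ados1} applies, since by Lemma~\ref{l:flag}, $L_\oslash$ is a flag complex,'' leaving implicit the facts (which you spell out) that $L_\oslash$, being a subdivision of $L$, has the same dimension and the same top $\zz/2$-homology. Nothing is missing or different.
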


\begin{proof}
Theorem~\ref{t:ados1} applies, since by Lemma~\ref{l:flag}, $L_\oslash$ is a flag complex. 
\end{proof}

\begin{remark}\label{r:schreve}
Kevin Schreve has pointed out to us that below the top dimension, it is easier for the $*$-condition to be satisfied for $L_\oslash$ than for $L$.  Indeed,  suppose $M$ is a $k$-cycle on $L$ and $\gD$ is a $k$-simplex in $M$.  The subdivision $M_\oslash$ is homologous to $M$.  Moreover, if the barycenter of $\gD$ occurs in $\gD_\oslash$, then the $*$-condition  holds for $(M_\oslash, \gD')$ where $\gD'$ is any $k$-simplex in $\gD$ containing the barycenter.
\end{remark}

\subsection{The definition of obstructor dimension}\label{ss:obdim}
\begin{definition}\label{d:obdim}\textup{(\cite{bkk}*{p.~225}).}
The \emph{obstructor dimension of $\pi$}, denoted by $\obdim \pi$, is $\ge m+2$ if there is an $m$-obstructor $K$ such that ``$K\subset \partial\pi$'' , i.e., 
	\[
	\obdim \pi:=\sup \{m+2\mid \exists \text{ an $m$-obstructor $K$ with ``$K\subset \partial\pi$''}\}
	\]
\end{definition}

If there is a proper, expanding, Lipschitz map from $\cone(K)$ into a contractible $(m+1)$-manifold, then $\vktwo^m(K)=0$. 
So,  if ``$K\subset \partial\pi$'' and $\pi$ acts properly on a contractible $(m+1)$-manifold, then  the van Kampen obstruction of $K$ vanishes in degree $m$ (cf.\ \cite{bkk}*{Thm.\,15, p.\,226}); hence,
	\begin{equation}\label{e:actinequality}
	\obdim \pi \le \actdim \pi .
	\end{equation}
	
\begin{example}\label{ex:raags}($\raag$s, cf.~\cite{ados}).  Suppose $A$ is a $\raag$ with $d$-dimensional nerve $L$.  The standard model for $BA$ is a union of tori; moreover, it is a locally $\cat(0)$ cubical complex of dimension $d+1$. Its universal cover $EA$ is $\cat(0)$, hence, contractible.  Therefore, $\gdim A =d+1$. Consequently, $\actdim A\le 2(d+1)$.  The lifts of the tori in $BA$ to $EA$, which contain  a given base point, give an isometrically embedded copy of $\ee^{OL}$ in $EA$. Thus, $\OL\subset \partial A$.  First suppose that $H_d(L;\zz/2)\neq 0$.  Since $L$ is a flag complex, Theorem~\ref{t:ados} implies that $OL$ is a $2d$-obstructor; so, $\obdim A\ge 2d+2$.  This gives $\actdim A \le 2d+2\le \obdim A$.  By  \eqref{e:actinequality}, both inequalities are equalities:
	\[
	\actdim A=2d+2=\obdim A .
	\]
Conversely, if $H_d(L;\zz/2)=0$, then $OL$ is not a $2d$-obstructor.  So,  when $d\neq 2$, $OL$ embeds in $S^{2d}$.  An argument of \cite{ados}*{Prop.\,2.2} then shows that the right-angled Coxeter group  with nerve $OL$, denoted $W_{OL}$,  acts properly on a contractible $(2d+1)$-manifold.  Since $A< W_{OL}$, so does $A$.  So, in this case, $\obdim A\le \actdim A\le 2d+1$. 
\end{example}

\section{Proper, expanding, Lipschitz maps}\label{s:proper}

\subsection{Coarse intersections of subgroups}\label{ss:coarse}
We will need  a lemma from \cite{msw}.   Before stating it as  Lemma~\ref{l:coarse} below, we explain some terminology.

Subspaces $A$ and $B$ of  a metric space $X$ are \emph{coarsely equivalent}, denoted $A\stackrel{c}{=}B$, if their Hausdorff distance is finite. The subspace $A$ is coarsely contained in $B$, denoted  $A\stackrel{c}{\subset}B$, if there is a positive real number $R$ such that $A$ is contained in the $R$-neighborhood $N_{R}(B)$ of $B$. In this case, we also write $A\subset_R B$. We use $A\cap_{R}B$ to mean $N_R(A)\cap N_R(B)$. A subspace $C$ is the \emph{coarse intersection} of $A$ and $B$, denoted by $C=A\stackrel{c}{\cap}B$, if the Hausdorff distance between $C$ and $A\cap_{R}B$ is finite for all sufficiently large $R$. In general, the coarse intersection of $A$ and $B$ may not exist; however, if it exists, then it is unique up to coarse equivalence.

The word metric (with respect to some finite generating set) on a finitely generated group $\pi$ gives it the structure of a metric space. Different choices of a generating set give rise to bi-Lipschitz equivalent word metrics. So, the various notions of  ``coarseness'' explained in the previous paragraph make sense for subsets of $\pi$. 

\begin{lemma}\label{l:coarse} \textup{(\cite[Lemma 2.2]{msw}).}
\label{coarse intersection subgroups}
Let $H_1,H_2$ be subgroups of a finitely generated group $\pi$. The coarse intersection of $H_1$ and $H_2$ exists and is coarsely equivalent to $H_1\cap H_2$.
\end{lemma}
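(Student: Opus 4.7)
The plan is to prove the stronger quantitative statement that for every $R \geq 0$ there is a constant $C_R < \infty$ such that
$$
N_R(H_1) \cap N_R(H_2) \subset N_{R+C_R}(H_1 \cap H_2).
$$
Since the reverse containment $H_1 \cap H_2 \subset N_R(H_1) \cap N_R(H_2)$ is automatic, this single estimate would give both conclusions at once: (i) $H_1 \cap H_2$ is at finite Hausdorff distance from $N_R(H_1) \cap N_R(H_2)$ for every $R$, yielding the asserted coarse equivalence, and (ii) the sets $N_R(H_1) \cap N_R(H_2)$ for different $R$ are therefore all at finite Hausdorff distance from each other, so the coarse intersection exists.

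The main step uses only left-invariance of the word metric and the finiteness of balls in $\pi$. Given $x \in N_R(H_1) \cap N_R(H_2)$, I would choose $h_i \in H_i$ with $d(x, h_i) \leq R$; left-invariance then gives $|h_1^{-1} h_2| = d(h_1, h_2) \leq 2R$, so $h_1^{-1} h_2$ lies in the \emph{finite} set
$$
F_R := B(e, 2R) \cap H_1^{-1} H_2.
$$
For each $g \in F_R$, I would fix, once and for all, a single decomposition $g = a_g^{-1} b_g$ with $a_g \in H_1$ and $b_g \in H_2$, and set $C_R := \max_{g \in F_R} |a_g|$, which is finite precisely because $F_R$ is finite.

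Then for $x$ as above and $g := h_1^{-1} h_2 \in F_R$, the element
$$
y := h_1 a_g^{-1} = h_2 b_g^{-1}
$$
visibly lies in $H_1 \cap H_2$, and satisfies $d(x, y) \leq d(x, h_1) + |a_g^{-1}| \leq R + C_R$. This is exactly the desired containment. There is no serious obstacle; the only point to be careful about is that the decomposition $g = a_g^{-1} b_g$ must be chosen \emph{before} $x$ varies, so that the constant $C_R$ is uniform in $x$, and the finiteness of $F_R$ makes this possible.
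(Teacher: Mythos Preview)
The paper does not supply its own proof of this lemma; it simply cites \cite[Lemma~2.2]{msw}. Your argument is correct and is precisely the standard proof: the finiteness of the ball $B(e,2R)$ lets you pre-select, for each of the finitely many double-coset representatives $g\in F_R$, a fixed factorization $g=a_g^{-1}b_g$, and then $y=h_1 a_g^{-1}=h_2 b_g^{-1}$ lands in $H_1\cap H_2$ within distance $R+C_R$ of $x$.
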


\subsection{Subcomplexes at infinity}\label{ss:a}
Suppose $H$ is a finitely generated subgroup of a finitely generated group $\pi$.  With respect to the word metrics, the  inclusion $H\hookrightarrow \pi$ obviously is Lipschitz.  The  next lemma, a standard result, implies that $H\hookrightarrow \pi$  is a proper map.  For a proof, see \cite{des}.

\begin{lemma} \label{l:proper} \textup{(\cite[Lemma 3.6]{des}).} 
Suppose $H$ is a finitely generated subgroup of a finitely generated group $\pi$. Then there exists a function $\rho:[0,\infty)\to [0,\infty)$. with $\lim_{t\to\infty}\rho(t)=+\infty$ such that for any $a,b\in H$, if $d_{H}(a,b)\ge t$, then $d_{\pi}(i(a),i(b))\ge\rho(t)$.  (Here $d_{\pi}$ and $d_{H}$ denote the word metrics on the corresponding groups.)
\end{lemma}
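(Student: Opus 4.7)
The plan is to construct $\rho$ as a ``modulus of properness'' for the inclusion $i:H\hookrightarrow\pi$. First I would reduce to the case $a=e$: both word metrics are left-invariant, so $d_H(a,b)=d_H(e,a^{-1}b)$ and $d_\pi(i(a),i(b))=d_\pi(e,i(a^{-1}b))$. Hence it suffices to find a nondecreasing $\rho:[0,\infty)\to[0,\infty)$ with $\rho(t)\to\infty$ such that
\[
d_H(e,h)\ge t \;\Longrightarrow\; d_\pi(e,i(h))\ge \rho(t) \qquad\text{for all } h\in H.
\]

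With this reduction in hand, I would simply define
\[
\rho(t) := \inf\bigl\{\,d_\pi(e,i(h)) : h\in H,\ d_H(e,h)\ge t\,\bigr\}.
\]
Monotonicity in $t$ is immediate, and the defining inequality is built in, so the only remaining task is to verify $\rho(t)\to\infty$. This is equivalent to showing that for every $N<\infty$ the sublevel set
\[
A_N := \bigl\{\,h\in H : d_\pi(e,i(h))\le N\,\bigr\}
\]
has finite $H$-diameter: once $t$ exceeds $\operatorname{diam}_H(A_N)$, no element of $A_N$ satisfies $d_H(e,h)\ge t$, and hence $\rho(t)>N$.

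The crux is the observation that $A_N$ is \emph{finite}. Its image $i(A_N)$ lies in the closed $N$-ball around $e$ in $\pi$, and this ball is finite because the Cayley graph of a finitely generated group is locally finite. Since $H$ is also finitely generated, any finite subset of $H$ has finite $H$-diameter, so $T(N):=\max_{h\in A_N} d_H(e,h)<\infty$. Choosing $\rho$ to exceed $N$ for all $t>T(N)$ (equivalently, $\rho(t)=\sup\{N : T(N)<t\}$) then gives the required divergence. There is essentially no obstacle in the argument; the lemma reduces, after the translation-invariance step, to the elementary fact that metric balls in finitely generated groups are finite.
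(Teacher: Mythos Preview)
Your argument is correct. The reduction to $a=e$ via left-invariance, the explicit definition of $\rho(t)$ as an infimum, and the finiteness of $A_N$ coming from local finiteness of the Cayley graph of $\pi$ together give exactly what is needed. One cosmetic point: after defining $\rho(t)=\inf\{\,d_\pi(e,i(h)):d_H(e,h)\ge t\,\}$ you have already fixed $\rho$, so the later sentence ``Choosing $\rho$ to exceed $N$ \dots (equivalently, $\rho(t)=\sup\{N:T(N)<t\}$)'' is redundant and slightly confusing; it would be cleaner to simply conclude from the finiteness of $A_N$ that $\rho(t)>N$ whenever $t>T(N)$, hence $\rho(t)\to\infty$.

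As for comparison with the paper: the paper does not actually supply a proof of this lemma. It calls the result standard and refers the reader to \cite{des} (Lemma~3.6 there). Your write-up is the expected elementary argument and would serve perfectly well in place of the citation.
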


We return to the situation of subsection~\ref{ss:flats}:  
$\ca=\{H_\gs\}_{\gs\in \cs(K)}$ is an arrangement of free abelian subgroups in a finitely generated  group $\pi$, indexed by the poset of simplices of a finite simplicial complex $K$.   Put $\ch =\bigcup H_\gs$.   
Further suppose $\ca$ is isomorphic to $\ca_\zz(K)$, the coordinate arrangement of free abelian groups in $\zz^{OK}$, defined by \eqref{e:coord} and \eqref{e:zok}.    The word metrics on the $H_\gs$ induce a metric on $\ch$. Since $H_\gs$ is bi-Lipschitz  to $\zz^{\vertex \gs}$, the  isomorphism $\cone (OK)^0\to \ch$ also is bi-Lipschitz.  

\begin{Theorem}\label{t:OK}
Suppose $\ca=\{H_\gs\}_{\gs\in \cs(K)}$ is an arrangement of free abelian subgroups isomorphic to the coordinate arrangement in $\zz^{OK}$. Then ``\,$OK\subset \partial \pi$''.
\end{Theorem}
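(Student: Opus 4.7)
The plan is to construct the required proper, expanding, Lipschitz map as the composition of the bi-Lipschitz identification $\cone(OK)^0 \to \ch$ (recalled just above the statement of the theorem) with the set-theoretic inclusion $\ch \hookrightarrow \pi$, and then to verify the three conditions separately. The Lipschitz and proper conditions are essentially bookkeeping: each $H_\gs \hookrightarrow \pi$ is Lipschitz with some constant $C_\gs$, and Lemma~\ref{l:proper} furnishes a divergent function $\rho_\gs$ controlling properness. Taking $\max C_\gs$ and $\min \rho_\gs$ over the finitely many simplices of $K$, one concludes that $\ch \hookrightarrow \pi$ is Lipschitz and proper; composing with the bi-Lipschitz identification preserves both. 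So the substance of the theorem lies in the expanding condition.

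For the expanding condition, let $\ga = (\gs_0,\geps)$ and $\gb = (\gt_0,\eta)$ be disjoint simplices of $OK$; by definition of $OK$ this means precisely that $\geps(v) \neq \eta(v)$ for every $v \in \vertex\gs_0 \cap \vertex\gt_0$. Under the coordinate-arrangement isomorphism, the set $\cone(\ga)^0$ sits inside $H_{\gs_0}$ as the $\geps$-orthant spanned by the standard basis, and similarly $\cone(\gb)^0$ sits inside $H_{\gt_0}$ as the $\eta$-orthant. I must show that for each $D > 0$ there is a bound $N(D)$ such that any $x \in \cone(\ga)^0$ and $y \in \cone(\gb)^0$ with $d_\pi(x,y) \le D$ must both have cone-radius at most $N(D)$.

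The key input will be Lemma~\ref{coarse intersection subgroups}: the coarse intersection of $H_{\gs_0}$ and $H_{\gt_0}$ in $\pi$ is coarsely equal to $H_{\gs_0} \cap H_{\gt_0} = H_{\gs_0 \cap \gt_0}$, with the last equality being part of the coordinate-arrangement hypothesis. So there exists $R = R(D)$ with $x \in N_R(H_{\gs_0 \cap \gt_0})$ in the $\pi$-metric; applying Lemma~\ref{l:proper} to $H_{\gs_0} \hookrightarrow \pi$ promotes this to closeness in the $H_{\gs_0}$-metric. Since $H_{\gs_0}$ is finitely generated free abelian and $H_{\gs_0 \cap \gt_0}$ is a coordinate subgroup, I would decompose $x = x_1 + x_2$ (additive notation) with $x_1 \in H_{\gs_0 \cap \gt_0}$ and $x_2$ in the complementary coordinate summand; the $x_2$-component is then forced to be bounded in $H_{\gs_0}$-norm. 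Likewise $y = y_1 + y_2$ with $y_2$ bounded and $y_1 \in H_{\gs_0 \cap \gt_0}$. Because $\geps$ disagrees with $\eta$ at every vertex of $\vertex(\gs_0 \cap \gt_0)$, the vectors $x_1$ and $y_1$ lie in opposite orthants of $H_{\gs_0 \cap \gt_0}$, so the $\ell^1$-norm of $x_1 - y_1$ in the standard $\zz$-basis is $\|x_1\|_1 + \|y_1\|_1$. A final application of Lemma~\ref{l:proper} to $H_{\gs_0 \cap \gt_0} \hookrightarrow \pi$ bounds $\|x_1 - y_1\|_1$, and hence both $\|x_1\|_1$ and $\|y_1\|_1$, as a function of $D$ only, yielding the desired uniform bound on the cone-radii of $x$ and $y$.

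The main obstacle I anticipate is not any single step but the propagation of coarse constants: each invocation of Lemma~\ref{l:proper} or Lemma~\ref{coarse intersection subgroups} replaces one divergent function by another, and one must check that the composite bound $N(D)$ can be chosen uniformly over all pairs of disjoint simplices of $OK$, which is ensured by the finiteness of $K$. A secondary point worth verifying cleanly is that the bi-Lipschitz identification $\cone(OK)^0 \cong \ch$ really does carry each cone $\cone(\ga)^0$ onto the correct $\geps$-orthant of $H_{\gs_0}$; this is immediate from how the standard basis of $H_{\gs_0}$ interacts with the sign data $\geps$ in the definition of $OK$.
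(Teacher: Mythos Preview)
Your proof is correct and follows essentially the same approach as the paper's: both use Lemma~\ref{coarse intersection subgroups} to reduce the divergence question to the common subgroup $H_{\gs_0\cap\gt_0}$, then invoke Lemma~\ref{l:proper} and the opposite-orthant property to bound the cone radii. The only organizational difference is that the paper splits into the cases $\gs_0=\gt_0$ and $\gs_0\neq\gt_0$ (phrasing the reduction via subcomplexes of the octahedral spheres $O\gs_0$, $O\gt_0$), whereas your explicit coordinate decomposition $x=x_1+x_2$ handles both cases uniformly; the content is the same.
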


By Lemma~\ref{l:arrang}, the arrangement of standard abelian subgroups in $A$, $\{H_\ga\}_{\ga\in \cs(L_\oslash)}$, is isomorphic to the coordinate arrangement $\ca_\zz(L_\oslash)$ in $\zz^{OL_\oslash}$.  So, the following theorem is a corollary of Theorem~\ref{t:OK}.  It is one of our main results.  

\begin{Theorem}\label{t:OL}
``\,$\OL_\oslash \subset \partial A$''.
\end{Theorem}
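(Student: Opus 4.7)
The plan is to deduce Theorem~\ref{t:OL} directly from Theorem~\ref{t:OK}. Combining Lemmas~\ref{l:iso} and~\ref{l:main}, the collection $\{H_\ga\}_{\ga\in\cs(L_\oslash)}$ satisfies the hypotheses of Lemma~\ref{l:arrang} for the complex $K=L_\oslash$ in the group $\pi=A$, so this arrangement is isomorphic to the coordinate arrangement in $\zz^{OL_\oslash}$, and Theorem~\ref{t:OK} delivers ``$OL_\oslash\subset \partial A$'' at once. The substantive task is therefore Theorem~\ref{t:OK}, which the rest of this outline addresses.

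The map $h\colon \cone(OK)^0\to \pi$ is built from the arrangement isomorphism: under $\zz^{OK}\cong \ch=\bigcup_\gs H_\gs$, each orthant $\zz^{\vertex\gs}_\geps$ of $\zz^{\vertex\gs}$ maps bijectively onto the sub-semigroup $H_\gs^\geps\subset H_\gs$ generated by $\{\geps(v)h_v\}_{v\in \vertex \gs}$. After triangulating $\cone(OK)$ so that $\cone(OK)^0$ is bi-Lipschitz to $\zz^{OK}$ with the $\ell^1$ metric, composition yields $h$. The Lipschitz property is immediate because $OK$ has finitely many simplices and each inclusion $H_\gs\hookrightarrow \pi$ is Lipschitz; properness follows by applying Lemma~\ref{l:proper} on each orthant and noting that the finitely many orthants glue along the identity.

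The heart of the matter is the expanding property: for disjoint simplices $(\gs_1,\geps_1),(\gs_2,\geps_2)$ of $OK$, the restrictions $h|_{\cone(\gs_i,\geps_i)^0}$ must diverge. Set $\gt=\gs_1\cap \gs_2$; disjointness in $OK$ forces $\geps_1(v)\neq \geps_2(v)$ for every $v\in \vertex \gt$. Suppose $x_i$ lies in the $\geps_i$-orthant $Q_i\subset \zz^{\vertex\gs_i}$ and $d_\pi(h(x_1),h(x_2))\le D$. Lemma~\ref{l:coarse} produces $p_i\in H_\gt$ with $d_\pi(h(x_i),p_i)$ bounded in terms of $D$; Lemma~\ref{l:proper} applied inside $H_{\gs_i}$ upgrades this to $\norm{x_i-p_i}\le R$ in $\zz^{\vertex \gs_i}$, and a further application inside $H_\gt$ together with the triangle inequality gives $\norm{p_1-p_2}\le R'$ in $\zz^\gt$, where $R,R'$ depend only on $D$.

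The final step is a sign-chasing argument in $\zz^\gt$. For each $v\in \vertex\gt$, if $|(p_1)_v|>R$ then the sign of $(p_1)_v$ must match $\geps_1(v)$, for otherwise the perturbation needed to land $x_1$ in $Q_1$ would exceed $R$; symmetrically, $|(p_2)_v|>R$ forces sign $\geps_2(v)=-\geps_1(v)$. Combined with $|(p_1)_v-(p_2)_v|\le R'$, these rule out $|(p_1)_v|>R+R'$, so each $|(p_i)_v|\le R+R'$. Hence $\norm{p_i}\le |\vertex \gt|(R+R')$, and therefore $\norm{x_i}$ is bounded by a constant depending on $D$ alone; choosing $C_i\subset \cone(\gs_i,\geps_i)^0$ to be a ball of this radius witnesses the required divergence. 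The case $\gt=\emptyset$ is easier: then $p_i=1$ and properness alone bounds $\norm{x_i}$. The main obstacle in the plan is exactly this sign/lattice reconciliation, which relies on the octahedralization to encode the sign data and on Lemma~\ref{l:main} to ensure $H_{\gs_1}\cap H_{\gs_2}=H_\gt$ rather than something larger.
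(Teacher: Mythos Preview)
Your reduction of Theorem~\ref{t:OL} to Theorem~\ref{t:OK} via Lemmas~\ref{l:iso}, \ref{l:main}, and~\ref{l:arrang} is exactly what the paper does; that part is verbatim the paper's argument.

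Your treatment of Theorem~\ref{t:OK} is correct but organized differently from the paper's. The paper splits into two cases according to whether the octahedral spheres $O\gs_1$ and $O\gs_2$ coincide. In the equal-sphere case it works entirely inside a single $\zz^{\vertex\gs}$ via Lemma~\ref{l:proper}, extracting the general containment $i(\cone(F_1)^0)\cap_r i(\cone(F_2)^0)\subset_{r'} i(\cone(F_1\cap F_2)^0)$ for subcomplexes $F_1,F_2\subset O\gs$; in the unequal-sphere case it first uses Lemma~\ref{l:coarse} to pass to $H_{\gs_1\cap\gs_2}$ and then applies that containment three times to $\Delta$, $\Delta'$, and $\sphere\cap\sphere'$, reducing back to Case~1. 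You instead run a single unified argument: pass to $H_\gt$ via Lemma~\ref{l:coarse}, pull back to lattice coordinates via Lemma~\ref{l:proper}, and then do an explicit coordinate-by-coordinate sign contradiction using that $\geps_1$ and $\geps_2$ disagree on every vertex of $\gt$. Your route is more hands-on and avoids the case split; the paper's route packages the sign reasoning into the cleaner statement \eqref{e:intersection} and reuses it, which is tidier but less explicit. Both approaches rest on the same two ingredients (Lemma~\ref{l:coarse} for the coarse intersection and Lemma~\ref{l:proper} for properness in subgroups), so the difference is presentational rather than conceptual.
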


\begin{proof}[Proof of Theorem~\ref{t:OK}]
Let $i:\cone(OK)^0\cong \ch \hookrightarrow \pi$ be the inclusion. As we explained above,  $i$ is Lipschitz and proper. The  remaining issue is to show that it is expanding. So, suppose $\Delta=(\gs,\geps)$ and $\Delta'=(\gs',\geps')$ are disjoint simplices of $OK$. Let $\sphere$ and $\sphere'$ denote  the octahedral spheres $O\gs$ and $O\gs$ inside $OK$. It suffices to show
	\begin{equation}
	\label{e:diverge}
	i(\cone(\Delta)^{(0)})\cap_r i(\cone(\Delta')^{(0)}) \textmd{\ is\ finite\ for\ any\ }r>0.
	\end{equation}
		
	\noindent
	\textbf{Case 1:}  $\sphere=\sphere'$. In this case $\sigma=\sigma'$ (and since $(\gs,\geps)$ and $(\gs', \geps')$ are disjoint, $\geps'=-\geps$). Condition (\ref{e:diverge}) follows from Lemma \ref{l:proper} applied to the monomorphism $i:\zz^{\vertex\sigma}\to \pi$.  Given any two subcomplexes $F_1$ and $F_2$ of $\sphere=O\sigma$, We also deduce from Lemma \ref{l:proper} that  for any $r>0$, there exists $r'>0$ such that 
	\begin{equation}
	\label{e:intersection}
	i(\cone(F_1)^{(0)})\cap_r i(\cone(F_2)^{(0)})\subset_{r'} i(\cone(F_1\cap F_2)^{(0)}).
	\end{equation}
	When $F_1\cap F_2=\emptyset$, $\cone(F_1\cap F_2)^{(0)}$ is just the cone point.
	\smallskip
	
	\noindent
	\textbf{Case 2:} $\sphere\neq \sphere'$. By Lemma \ref{coarse intersection subgroups}, 
	\begin{align}
	\label{e:intersection1}
	i(\cone(\Delta)^{(0)})\cap_r i(\cone(\Delta')^{(0)}) &\subset i(\cone(\sphere)^{(0)}) \cap_r i(\cone(\sphere')^{(0)}) \nonumber\\
	&\subset_{r'}i(\cone(\sphere\cap \sphere')^{(0)})
	\end{align}
	for some $r'>0$. Applying (\ref{e:intersection}) to subcomplexes of $\sphere$, $\sphere'$ and $\sphere\cap \sphere'$ respectively, we deduce that
	\begin{align}
	\label{e:intersection2}
	&	i(\cone(\Delta)^{(0)})\cap_r i(\cone(\sphere\cap \sphere')^{(0)})\subset_{r'} i(\cone(\Delta\cap \sphere\cap \sphere')^{(0)}) \\
	\label{e:intersection3} & i(\cone(\Delta')^{(0)})\cap_r i(\cone(\sphere\cap \sphere')^{(0)})\subset_{r'} i(\cone(\Delta'\cap \sphere\cap \sphere')^{(0)}) \\
	\label{e:intersection4} & i(\cone(\Delta\cap \sphere\cap \sphere')^{(0)})\cap_r i(\cone(\Delta'\cap \sphere\cap \sphere')^{(0)}) \textmd{\ is\ finite}.
	\end{align}
Condition (\ref{e:diverge}) follows from (\ref{e:intersection1}), (\ref{e:intersection2}), (\ref{e:intersection3}) and (\ref{e:intersection4}).
\end{proof}

\begin{remark}
Theorem~\ref{t:OK} and its proof can be extended to the case of an arrangement of free abelian subgroups  $\ca$ isomorphic to the intersection of the integer lattice with some real subspace arrangement (not necessarily a coordinate subspace arrangement).
\end{remark}

\begin{remark}
The proof of Theorem \ref{t:OK} can be simplified simpler if we know the abelian subgroups are quasi-isometrically embedded. This is indeed the case for standard abelian subgroups in an Artin group (cf.\ Lemma \ref{l:qi}).
\end{remark}

\section{The action dimension of $A$}\label{s:acdim} 
First, consider the case when $A$ is  spherical.  The  action dimension of any braid group is computed in \cite{bkk}.  More generally, by using Theorem~\ref{t:OL}, Le \cite{le} computed the action dimension of any spherical Artin group.   Here is the result.

\begin{Theorem}\label{t:spherical}\textup{(cf.\ \cite{bkk}*{p.\,234}, \cite{le}*{Thm.~4.10}).} Suppose $A$ is a spherical Artin group associated to $(W,S)$.  Let $T_1,\dots , T_k$ be the irreducible components of $\bD(W,S)$ and put $d_i=\Card(T_i) -1$.  Then $\actdim A= k+\sum 2d_i$.  In other words, $\actdim A=2\gdim A - k$.
\end{Theorem}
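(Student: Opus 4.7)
Write $A = A_{T_1} \times \cdots \times A_{T_k}$. My plan is to build, for each irreducible spherical factor $A_{T_i}$ with $d_i = \Card(T_i) - 1$, a $(2d_i + 1)$-dimensional aspherical manifold model for $BA_{T_i}$; the product then gives a $(k + 2\sum d_i)$-dimensional manifold $K(A,1)$. Fix one such factor $A_T$ with $d = \Card(T) - 1$. The Deligne hyperplane complement $M_T \subset \bC^{d+1}$ is an aspherical complex manifold of real dimension $2d + 2$ on which $W_T$ acts freely. To save one dimension, I quotient by the commuting free $\bpR$-action by scalar multiplication: since $W_T$ preserves norms, a $W_T$-fixed coset $\bpR \cdot z \in M_T/\bpR$ would require $wz = \lambda z$ with $\lambda > 0$, forcing $\lambda = 1$ and hence $w = 1$ (because $z$ avoids the reflecting hyperplanes). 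Thus $W_T$ still acts freely on the smooth $(2d+1)$-manifold $M_T/\bpR$, which is aspherical (being homotopy equivalent to $M_T$), and $(M_T/\bpR)/W_T$ is the desired manifold model.

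\textbf{Lower bound, irreducible case.} I use $\obdim A \le \actdim A$ together with Theorem~\ref{t:OL}, which gives ``$OL_\oslash \subset \partial A_T$.'' Since $(W_T, T)$ is irreducible spherical, the nerve $L$ is the full $d$-simplex on $T$, so $L_\oslash$ is a cone with apex the barycentric vertex $T \in S_\oslash$: $L_\oslash = \{T\} * \partial L_\oslash$, where $\partial L_\oslash$ is a flag triangulation of $S^{d-1}$ with $H_{d-1}(\partial L_\oslash; \zz/2) \ne 0$. Consequently
\[
OL_\oslash = O(\{T\}) * O(\partial L_\oslash) = S^0 * O(\partial L_\oslash) = \Sigma O(\partial L_\oslash).
\]
Theorem~\ref{t:ados1} shows that $O(\partial L_\oslash)$ is a $(2d - 2)$-obstructor, and the join formula for the van Kampen obstruction applied to $S^0 * O(\partial L_\oslash)$ upgrades this to: $OL_\oslash$ is a $(2d - 1)$-obstructor. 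Hence $\obdim A_T \ge 2d + 1$, matching the upper bound.

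\textbf{Lower bound, reducible case, and main obstacle.} For general $k \ge 1$, I would invoke additivity of the action dimension under the direct product decomposition of spherical Artin groups, $\actdim(A_{T_1} \times \cdots \times A_{T_k}) = \sum_i \actdim A_{T_i}$, which yields $\actdim A = \sum (2d_i + 1) = k + 2\sum d_i$. The chief obstacle is that a naive join of the irreducible-factor obstructors in $OL_\oslash = OL_{T_1,\oslash} * \cdots * OL_{T_k,\oslash}$ produces only a $(2\sum d_i - 1)$-obstructor via iterated suspensions, short of the desired bound by $k - 1$ dimensions. Thus the additivity of $\actdim$ under products cannot be deduced from obstructor dimension alone and must be established independently---either by a refined obstructor construction inside $OL_\oslash$ exploiting the central $\bZ^k \le A$ (contributed by the longest-element central generators $\gd_{T_i}$ in each factor), or by a direct coarse-geometric argument using the manifold models constructed in the upper bound.
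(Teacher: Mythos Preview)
Your upper bound and the irreducible lower bound are both correct and essentially identical to the paper's argument. The gap is in the reducible case: you believe the obstructor method falls short by $k-1$, but in fact it does not, and the paper closes the argument in one line using exactly the obstructor dimension.

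The missing ingredient is \cite{bkk}*{Lemma~6}: for any finitely generated groups $\pi_1,\pi_2$,
\[
\actdim(\pi_1\times\pi_2)\le\actdim\pi_1+\actdim\pi_2
\quad\text{and}\quad
\obdim(\pi_1\times\pi_2)\ge\obdim\pi_1+\obdim\pi_2.
\]
Since the irreducible case gives $\obdim A_{T_i}=\actdim A_{T_i}=2d_i+1$, these two inequalities together with $\obdim\le\actdim$ force
\[
\actdim A\le\sum_i(2d_i+1)\le\obdim A\le\actdim A,
\]
so all are equalities. No separate additivity result for $\actdim$ is needed.

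At the level of obstructors, your error is in the join formula. The join of an $m$-obstructor with an $n$-obstructor is an $(m+n+2)$-obstructor, not $(m+n+1)$; suspension adds only $1$ because $S^0$ plays the role of a $(-1)$-obstructor in this count. With the correct bookkeeping, the iterated join $OL_\oslash=OL_{T_1,\oslash}*\cdots*OL_{T_k,\oslash}$ is a
\[
\Bigl(\,\sum_i(2d_i-1)+2(k-1)\Bigr)=\bigl(2\textstyle\sum d_i+k-2\bigr)\text{-obstructor},
\]
yielding exactly $\obdim A\ge 2\sum d_i+k$. The superadditivity statement in \cite{bkk}*{Lemma~6} is proved by precisely this join construction, using that $\cone(K_1*K_2)$ is bi-Lipschitz equivalent to $\cone(K_1)\times\cone(K_2)$. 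So obstructor dimension alone suffices; your proposed ``refined obstructor'' or ``direct coarse-geometric argument'' is unnecessary.
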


\begin{proof}
The nerve of $(W,S)$ is a simplex $\gs$ of dimension $d=\Card(S)-1$. First consider the case $k=1$, i.e., the case where $(W,S)$ is irreducible.  The barycenter $v$ of $\gs$ is then a vertex of $\gs_\oslash$; the link of $v$ in $\gs_\oslash$ is $(\partial \gs)_\oslash$; and $\gs_\oslash$ is the cone, $(\partial \gs)_\oslash * v$.  Since  $(\partial \gs)_\oslash$ is a triangulation of $S^{d-1}$, it follows from Theorem~\ref{t:OL} that $O(\partial \gs_\oslash)$ is a $(2d-2)$-obstructor.  By \cite{bkk}*{Lemma 9}, $O\gs_\oslash=O(\partial \gs_\oslash) * S^0$ is a $(2d-1)$-obstructor. Therefore, $\obdim A\le 2d+1$.  Since the hyperplane complement in $S^{2d+1}$ is a $(2d+1)$-manifold, $\actdim A\ge 2d+1$.  By \eqref{e:actinequality}, both inequalities are equalities; so, $\actdim A=2d+1=\obdim A$.

Consider the general case, $A=A_{T_1}\times \cdots A_{T_k}$.  By \cite{bkk}*{Lemma 6}, for any two  groups $\pi_1$, $\pi_2$, we have $\actdim (\pi_1\times \pi_2)\le \actdim \pi_1 +\actdim \pi_2$ and 
$\obdim (\pi_1\times \pi_2)\ge \obdim \pi_1 +\obdim \pi_2$. Therefore, 
\[
\actdim A \le \sum_{i=1}^k (2d_i+1)\le \obdim A,
\]
which proves the lemma.
\end{proof}

Using Theorems~\ref{t:ados2} and \ref{t:OL} we can extend the results about $\raag$s in Example~\ref{ex:raags} to general Artin groups.  This gives the following restatement of the Main Theorem in the Introduction.

\begin{Theorem}\label{t:main}
Suppose $A$ is an Artin group associated to a Coxeter system with nerve $L$ of dimension $d$.  If $H_d(L;\zz/2)\neq 0)$, then $\obdim A\ge 2d+2$.  So, if the $K(\pi,1)$-Conjecture holds for $A$, then $\actdim A=2d+2$.
\end{Theorem}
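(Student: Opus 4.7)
The strategy is to combine the two principal technical results of the paper---namely Theorem~\ref{t:ados2}, which identifies $OL_\oslash$ as a $2d$-obstructor under the hypothesis $H_d(L;\zz/2)\neq 0$, and Theorem~\ref{t:OL}, which establishes the coarse embedding ``$OL_\oslash \subset \partial A$''---and then to plug them into the Bestvina--Kapovich--Kleiner lower bound for the obstructor dimension.

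First I would invoke Theorem~\ref{t:ados2}: since $L$ has dimension $d$ and $H_d(L;\zz/2)\neq 0$, the octahedralized subdivision $OL_\oslash$ is a $2d$-obstructor, i.e.\ $\vktwo^{2d}(OL_\oslash)\neq 0$. Next I would apply Theorem~\ref{t:OL}, which produces a proper, expanding, Lipschitz map $\cone(OL_\oslash)^0 \to A$; in the language of Definition~\ref{d:boundary}, this is exactly the assertion ``$OL_\oslash \subset \partial A$''. By Definition~\ref{d:obdim} these two facts together force $\obdim A \ge 2d+2$, and the general inequality \eqref{e:actinequality} then yields
\[
\actdim A \;\ge\; \obdim A \;\ge\; 2d+2.
\]

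For the second assertion, assume the $K(\pi,1)$-Conjecture holds for $A$. Then the Salvetti complex $\sal S$, which has dimension $d+1$, is a model for $BA$; by Corollary~\ref{c:cd} this forces $\gdim A = d+1$. The standard inequality $\actdim \pi \le 2\,\gdim \pi$ (recorded in the first paragraph of the Introduction) gives $\actdim A \le 2d+2$, and combining this with the lower bound above we conclude $\actdim A = 2d+2$.

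Because all of the hard technical content---the $\ast$-condition calculation packaged in Theorem~\ref{t:ados2} and the coarse-geometric verification carried out in Theorem~\ref{t:OL}---has already been completed, the Main Theorem itself is essentially a bookkeeping step, and I do not anticipate any serious obstacle in assembling the argument. The only points requiring a touch of care are (i) checking that Corollary~\ref{c:cd} genuinely yields the \emph{exact} value $\gdim A = d+1$ under the $K(\pi,1)$ hypothesis (and not merely an upper bound), so that the general inequality $\actdim \le 2\,\gdim$ closes the gap, and (ii) noting that the lower bound on $\actdim A$ does \emph{not} require the $K(\pi,1)$-Conjecture, since Theorem~\ref{t:OL} holds unconditionally.
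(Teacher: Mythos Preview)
Your proposal is correct and follows exactly the same route as the paper's own proof: invoke Theorems~\ref{t:ados2} and~\ref{t:OL} to obtain $\obdim A\ge 2d+2$, then under the $K(\pi,1)$-Conjecture use Corollary~\ref{c:cd} together with $\actdim\le 2\gdim$ to close the gap. The paper's version is terser but contains no additional ideas.
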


\begin{proof}
By Theorems \ref{t:ados2} and \ref{t:OL}, $\obdim A\ge 2d+2$. If the $K(\pi,1)$-Conjecture holds for $A$, then, by Corollary~\ref{c:cd}, $\gdim A=d+1$ and consequently, the maximum possible value for its action dimension is $2d+2$.
\end{proof}
The other statement in Example~\ref{ex:raags} is that in the case of a $\raag$, if $H_d(L;\zz/2)=0$, then $\actdim A\le 2d+1$.  In her PhD thesis Giang Le \cite{le}  proved that the same conclusion holds for general Artin groups under the slightly weaker hypothesis that $H^d(L;\zz)=0$.

\begin{Theorem}\label{t:le}\textup{(Le \cite{le}).} 
Suppose $A$ is an Artin group associated to a Coxeter system with  $d$-dimensional nerve $L$.  Further suppose that the $K(\pi,1)$-Conjecture holds for $A$.  If $H^d(L;\zz)=0$ (and $d\neq 2$), then $\actdim A\le 2d+1$.
\end{Theorem}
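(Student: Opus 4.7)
The plan is to adapt the argument from the right-angled case in Example~\ref{ex:raags} to general Artin groups, with the subdivided nerve $L_\oslash$ replacing $L$ throughout. Under the hypothesis $H^d(L;\zz)=0$, the goal is (i) to embed $OL_\oslash$ in $S^{2d}$, and (ii) to promote this embedding to a proper, cocompact action of $A$ on a contractible $(2d+1)$-manifold; combining these will give $\actdim A\le 2d+1$.

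For step (i), I would first establish an integral refinement of Theorem~\ref{t:ados1}.  Since $L_\oslash$ is a $d$-dimensional flag complex by Lemma~\ref{l:flag}, the ADOS construction which produces an obstructor cycle $\Omega \in Z_{2d}(\cac(OL_\oslash);\zz/2)$ from a top $\zz/2$-cycle on $L_\oslash$ lifts to integer coefficients.  If $H^d(L;\zz)=0$ then, by the universal coefficient theorem and the fact that $L_\oslash$ and $L$ have canonically isomorphic cohomology, there are no nontrivial top-dimensional integer classes available to detect a nonzero obstruction, so the integer van Kampen class $\vk^{2d}(OL_\oslash)\in H^{2d}(\cac(OL_\oslash);\zz)$ must vanish.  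Because $d\ne 2$, the Haefliger--Weber embedding theorem then furnishes an embedding $OL_\oslash\hookrightarrow S^{2d}$; the case $d\le 1$ is trivial.

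For step (ii), I would construct a $(2d+1)$-dimensional manifold thickening of the Salvetti complex, using the embedding $OL_\oslash\hookrightarrow S^{2d}$ as combinatorial gluing data.  Since the $K(\pi,1)$-Conjecture holds, $\sal S$ is a model for $BA$ of dimension $d+1$.  For each spherical $T$ with irreducible decomposition $\{T_1,\dots,T_k\}$, Theorem~\ref{t:spherical} provides a manifold model $M_T$ for $BA_T$ of dimension $2|T|-k\le 2d+1$.  These pieces are organized by the poset $\cs(W,S)$ and glued along their standard abelian subgroup structure.  Around each vertex of $\sal S$ the combinatorial link is governed by $L$, and the embedding $OL_\oslash\hookrightarrow S^{2d}$ produces a compatible realization of these link data on the boundary of a $(2d+1)$-ball; assembling these gives a $(2d+1)$-manifold whose fundamental group is $A$, and whose universal cover is a contractible $(2d+1)$-manifold on which $A$ acts properly and cocompactly.

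The main obstacle is carrying out the gluing in step (ii).  In the right-angled case this is sidestepped by the inclusion $A\hookrightarrow W_{OL}$ and the fact that the Davis complex of $W_{OL}$ is already a $(2d+1)$-manifold as soon as $OL$ triangulates $S^{2d}$; no such ambient Coxeter group is available for a general Artin group.  Instead, one must directly assemble a simple complex of manifolds indexed by $\cs(W,S)$ whose local groups are the spherical Artin groups $A_T$ and whose total space is genuinely a manifold (rather than merely a CW complex).  Verifying the manifold condition along the strata coming from non-maximal spherical subsets, and then checking contractibility of the universal cover, is the delicate technical core of the argument; it is here that both the integer embedding of $OL_\oslash$ in $S^{2d}$ and the $K(\pi,1)$-hypothesis are essential.
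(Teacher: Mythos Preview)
Your overall plan---embed $OL_\oslash$ in $S^{2d}$ and then use that embedding as gluing data to thicken the Salvetti complex to a $(2d+1)$-manifold---is exactly the strategy that the paper's authors explicitly say they \emph{cannot} complete.  In the Comments following Theorem~\ref{t:le} they note that the vanishing of the top $\zz/2$-homology does yield an embedding of $OL_\oslash$ into a flag triangulation of $S^{2d}$, but then write: ``However, we do not know how to deduce from this that $A$ acts properly on a contractible $(2d+1)$-manifold.''  Your ``main obstacle'' is therefore not a technicality but the whole problem, and you have not indicated any mechanism for solving it.  In the right-angled case the embedding of $OL$ is useful only because there is an ambient right-angled Coxeter group $W_J$ whose Davis complex is already the desired manifold; as you yourself note, no such ambient group exists in general, and the local link data coming from an embedding of $OL_\oslash$ do not by themselves assemble into a manifold model for $BA$.

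Le's argument, as described in the paper, uses the hypothesis $H^d(L;\zz)=0$ in a completely different way.  Rather than embedding $OL_\oslash$ in a sphere, Le uses the cohomological vanishing (which forces $H_d(L;\zz)=0$ and $H_{d-1}(L;\zz)$ torsion-free) to attach cells to $L$ and embed it in a \emph{contractible} simplicial complex $L'$ of the same dimension $d$ (this is where $d\neq 2$ enters).  For each simplex $\gs$ of $L$ there is a $(2d+1)$-manifold-with-boundary model for $BA_\gs$; Le glues these along their boundaries according to the combinatorics of $L'$, and the contractibility of $L'$ is what makes the resulting $(2d+1)$-manifold aspherical (given the $K(\pi,1)$-Conjecture).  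So the role of $H^d(L;\zz)=0$ is to enlarge $L$ to something contractible of the same dimension, not to kill a van Kampen obstruction for $OL_\oslash$.
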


\begin{giang}
As explained in Example~\ref{ex:raags}, when $A$ is a $\raag$, if $H_d(L;\zz/2)= 0$, then $OL$ is not a $2d$-obstructor and (provided $d\neq 2$), $OL$ embeds as a full subcomplex of some flag triangulation $J$ of $S^{2d}$ (cf.\ the proof of Proposition~2.2 in \cite{ados}).  Since the Davis complex for the right-angled Coxeter group $W_J$ is a contractible $(2d+1)$-manifold and since $A<W_{OL}<W_J$, we see that $\actdim A\le 2d+1$.  Similarly, for a general Artin group $A$, the vanishing of $H_d(L;\zz/2)$ implies that $OL_\oslash$ is a full subcomplex of some flag triangulation $S^{2d}$. However, we do not know how to deduce from this that $A$ acts properly on a contractible $(2d+1)$-manifold.  Le's argument in \cite{le} is different.  The condition $H^d(L;\zz)=0$ implies $H_d(L;\zz)=0$ and that $H_{d-1}(L;\zz)$ is torsion-free.  Standard arguments show that one can attach cells to $L$ to embed it into a contractible complex $L'$  of the same dimension $d$ (provided $d\neq 2$).  We can assume $L'$ is a simplicial complex. For each simplex $\gs$ of $L$, there is a model for $BA_\gs$ by a $(2d+1)$-manifold with boundary.   Le proves Theorem~\ref{t:le} by showing that one can glue together copies of these manifolds along their boundaries, in a fashion dictated by $L'$, to obtain another manifold with boundary $M$ with fundamental group $A$ so that when the $K(\pi,1)$-Conjecture holds for $A$, $M$ is a model for $BA$. 
\end{giang}

\begin{remark}
The Action Dimension Conjecture is the conjecture that if a group $\pi$ acts properly on a contractible manifold $M$, then the $\eltwo$-Betti numbers of $\pi$ vanish above the middle dimension of $M$.  In \cite{ados}*{\S7} it is explained  how the calculations for $\raag$s provide evidence for this conjecture.  The same discussion applies to the results in this paper for general Artin groups.  To wit, when the $K(\pi,1)$-Conjecture holds for an Artin group $A$, it is proved in \cite{dl} that the $\eltwo$-Betti number of $A$ in degree $(k+1)$ is equal to the ordinary (reduced) Betti number of $L$ (or $L_\oslash$) in degree $k$; in particular, the $\eltwo$-Betti numbers of $A$ vanish in degrees $>d+1$, when $d=\dim L$.
\end{remark}

\bibliographystyle{88}
\bibliography{1}

Michael W. Davis, Department of Mathematics, The Ohio State University, 231 W. 18th Ave., Columbus, Ohio 43210, \url{davis.12@osu.edu}

Jingyin Huang, Department of Mathematics and Statistics, McGill University, 805 Sherbrooke W., Montreal QC H3A 0B9, Canada, \url{jingyin.huang@mcgill.ca}

\obeylines
\end{document}